\newcommand{\vol}{{\rm vol}}
\newcommand{\Vol}{{\rm Vol}}
\newcommand{\so}{\mbox{${\mathfrak s \mathfrak o}$}}
\newcommand{\C}{\mbox{${\mathbb C}$}}
\newcommand{\I}{\mbox{${\mathbb I}$}}
\newcommand{\PP}{\mbox{${\mathbb P}$}}
\newcommand{\R}{\mbox{${\mathbb R}$}}
\newcommand{\tr}{{\rm tr}}
\newcommand{\Ric}{{\rm Ric}}
\newcommand{\SO}{{\rm SO}}
\newcommand{\SU}{{\rm SU}}
\newcommand{\Sp}{{\rm Sp}}
\newcommand{\U}{{\rm U}}
\numberwithin{equation}{section}
\newtheorem{thm}{Theorem}[section]
\newtheorem{lem}[thm]{Lemma}
\newtheorem{prop}[thm]{Proposition}
\newtheorem{cor}[thm]{Corollary}
\newtheorem{remark}[thm]{Remark}
\newtheorem{ex}[thm]{Example}
\newenvironment{rmk}{\begin{remark} \em}{\end{remark}}
\newenvironment{example}{\begin{ex} \em}{\end{ex}}
\begin{document}

\title[]{Stability of Einstein metrics on fiber bundles}

\author{Changliang Wang}
\address{Department of Mathematics and Statistics, McMaster University, Hamilton, Ontario, L8S 4K1 Canada}
\email{wangc114@math.mcmaster.ca}

\author{Y. K. Wang}
\address {Department of Mathematics and Statistics, McMaster University, Hamilton, Ontario, L8S 4K1 Canada}
\email{wang@mcmaster.ca}

\subjclass[2010]{Primary 53C25}

\keywords{linear stability, Einstein metrics, Riemannian submersions}

\date{revised \today}

\begin{abstract}
We study the linear stability of Einstein metrics of Riemannian submersion type. First, we derive a general instability condition for such Einstein metrics and provide some applications. Then we study instability arising from Riemannian product structures on the base. As an application, we estimate the coindex of the Einstein metrics constructed in \cite{WZ90} and \cite{Wan92}. Finally, we investigate more closely the linear stability of Einstein metrics from circle bundle constructions and obtain a rigidity result for linearly stable Einstein metrics of this type.
\end{abstract}

\maketitle

\section{\bf Introduction}
Einstein metrics appear naturally in some geometric variational problems. For example, Einstein metrics on
a closed manifold $M^{n}$ are precisely the critical points of the normalized total scalar curvature functional
\begin{equation}
\widetilde{\bf S}(g)=\frac{1}{(\Vol(M, g))^{\frac{n-2}{n}}}\int_{M}s_{g}\,d\vol_{g},
\end{equation}
where $s_{g}$ is the scalar curvature of the Riemannian metric $g$. At such Einstein metrics, the second variation formula of the functional $\widetilde{\bf S}$ was first derived and studied by M. Berger \cite{Ber70} and N. Koiso \cite{Koi78}, \cite{Koi79}.
This formula shows that an Einstein metric is always a saddle point. Indeed, the second variation of $\widetilde{\bf S}$ is non-negative in the direction of conformal changes, by the theorem of Lichnerowicz-Obata. On the other hand,  along variations that are transverse to diffeomorphisms and conformal changes, i.e., for $h \in C^{\infty}(M, S^{2}(T^{*}M))$ with $\tr_{g} h=0$ and $\delta_{g}h=0$, the second variation of  $\widetilde{\bf S}$ at $g$ is given by
\begin{equation}
-\frac{1}{2(\Vol(M, g))^{\frac{n-2}{n}}}\int_{M}\langle\nabla^{*}\nabla h-2\mathring{R}h, h\rangle \,d\vol_{g},
\end{equation}
where $\delta_{g}h$ is the divergence of $h$, and $(\mathring{R}h)_{ij}:=R_{ikjl}h^{kl}$. Then by basic facts about the spectrum of elliptic operators on compact manifolds, there always exists an infinite-dimensional subspace of the transverse traceless
symmetric $2$-tensors (in short, the {\em TT-tensors}) on which the second variation of $\widetilde{\bf S}$ is negative definite. However,
there may also exist TT-tensors along which the second variation is positive. The dimension of a maximal subspace of
such TT-tensors is called the {\it coindex} of $g$ and this number is finite by elliptic theory.
This leads to the linear stability problem for Einstein metrics.

A closed Einstein manifold $(M^{n}, g)$ is called {\em linearly stable} if $\langle \nabla^{*}\nabla h - 2 \mathring{R} h, h\rangle_{L^{2}(M)}\geq 0$ for all TT-tensors $h$ and {\em linearly unstable} if otherwise. For Einstein metrics on noncompact manifolds
linear stability has also been considered by restricting one's attention to compactly supported divergence-free symmetric $2$-tensors
in the second variation formula. In this paper we shall refer to linear stability (resp. linear instability) simply as stability
(resp. instability).

Einstein metrics with non-positive scalar curvature tend more often to be stable. Examples include Einstein metrics with negative sectional curvature \cite{Koi78} and compact K\"{a}hler Einstein metrics with non-positive scalar curvature \cite{DWW07}. Complete Riemannian manifolds with imaginary Killing spinors, which must be non-compact and have negative Einstein constant, are also stable, see \cite{Kro17} and \cite{Wan17}.  In the Ricci-flat case, a closed Riemannian manifold with a non-trivial parallel spinor is stable \cite{DWW05}. By contrast, there are very few examples of stable Einstein manifolds with positive scalar curvature. All known examples are in fact irreducible symmetric spaces of compact type, but there is also an infinite subfamily of these which are unstable, see \cite{Koi80} and \cite{CH15}.

Recall that for TT-tensors at an Einstein metric with Einstein constant $E$,  the operator
$\nabla^{*}\nabla  - 2 \mathring{R}$ coincides with $-(\Delta_L + 2E \,\I)$ where $\Delta_L$ is the analyst's Lichnerowicz Laplacian.
Hence the definition of linear stability we use here allows for the vanishing of $ \nabla^{*}\nabla h - 2 \mathring{R} h$
(infinitesimal Einstein deformations) and does not imply that the functional $ \widetilde{\bf S}$
is locally maximized when restricted to TT-variations. On the other hand, linear instability along a nonzero TT-tensor gives
a non-trivial variation along which $ \widetilde{\bf S}$ has a local minimum. When $E > 0$, Theorem 1.3 in \cite{Kro15} then
implies that such an Einstein metric is {\em dynamically unstable}. Therefore, all linearly unstable Einstein metrics discussed in this paper are dynamically unstable, for example Einstein metrics investigated in Corollaries $\ref{TorusBundleInstability}$ and $\ref{QuaternionicInstability}$. For more information about dynamical instability
under the Ricci flow, see in addition \cite{Ye93}, \cite{Ses06}, \cite{Has12}, and \cite{HM14}.

Einstein metrics can also be characterized through variation problems for other interesting functionals, e.g., Perelman's $\lambda$- and $\nu$-functionals \cite{Per02}, and the $\nu_{+}$-functional introduced in \cite{FIN05}. Interestingly, the second variation formulas of these functionals lead to the same stability condition at Einstein metrics along TT-tensor directions, but lead to a different stability condition along conformal change directions, see \cite{Zhu11}, \cite{CZ12}, and \cite{CH15}. Since we work with TT-tensors in this paper, our results
(described below) can also be interpreted in terms of these other functionals.

In this paper, we shall investigate the instability of Einstein metrics with positive Einstein constant on the total spaces of Riemannian submersions with totally geodesic fibers. Einstein metrics of this type including their moduli and stability are interesting not only in geometry but also for supergravity theories in physics.  For basic facts about Riemannian submersions, see \cite{ONe66} and \cite{Bes87}. Our setup
is as follows. Let $\pi: (M^{n+r}, g)\rightarrow (B^{n}, \check{g})$ be a Riemannian submersion with totally geodesic fibers, i.e., O'Neill's $T$-tensor vanishes. The Einstein conditions for the metric $g$ are given in 9.61 of \cite{Bes87}. They imply that the scalar curvature $\hat{s}$ of the induced metric $\hat{g}$ on fibers and the norm $\|A\|$ of O'Neill's $A$-tensor are constant on $M$, and the scalar curvature $\check{s}$ of $\check{g}$ is constant on $B$.

When $g$ is Einstein and $\pi$ is a trivial Riemannian submersion, i.e., $(M^{n+r}, g)=(F^{r}, \hat{g})\times (B^{n}, \check{g})$, it is
well-known that $g$ is unstable with the canonical destabilizing direction $\frac{1}{r}\hat{g}-\frac{1}{n}\check{g}$. This motivates us to examine the stability condition along the direction $\frac{1}{r}g-\frac{n+r}{nr}\pi^{*}\check{g}$ in the general case. From the Einstein conditions for $g$, one sees that this is a TT-tensor on $M$ and we obtain the following instability condition.

\begin{thm}\label{ProductTypeInstability}
\begin{equation*}
\left\langle (\nabla^{*}\nabla-2\mathring{R})\left(g-\frac{n+r}{n}\pi^{*}\check{g}\right), \left(g-\frac{n+r}{n}\pi^{*}\check{g}\right)\right\rangle_{L^{2}(M)}=\Vol(M, g)\frac{2(n+r)}{n^{2}}(r\check{s}-2n\hat{s}).
\end{equation*}

In particular, if $r\check{s}< 2n\hat{s},$ the Einstein metric $g$ is unstable.
\end{thm}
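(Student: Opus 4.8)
The plan is to evaluate the two constituents of the quadratic form $\langle(\nabla^{*}\nabla-2\mathring{R})h,h\rangle$ separately, where $h:=g-\frac{n+r}{n}\pi^{*}\check{g}$, and only at the end combine them using the Einstein conditions. First I would record the shape of $h$ relative to the orthogonal splitting $TM=\mathcal{H}\oplus\mathcal{V}$ into horizontal and vertical subbundles. Writing $g=g_{\mathcal{H}}+g_{\mathcal{V}}$ with $g_{\mathcal{H}}=\pi^{*}\check{g}$, one gets $h=g_{\mathcal{V}}-\frac{r}{n}g_{\mathcal{H}}$; as a $g$-symmetric endomorphism $h$ is diagonal in any adapted orthonormal frame $\{X_i\}_{i=1}^{n}\cup\{U_a\}_{a=1}^{r}$, equal to $+1$ on $\mathcal{V}$ and $-\frac{r}{n}$ on $\mathcal{H}$, so that $\tr_g h=r-n\cdot\frac{r}{n}=0$, consistent with $h$ being the TT-tensor noted in the excerpt. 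Since $\hat{s}$, $\|A\|$ and (the pullback of) $\check{s}$ are all constant on $M$ by the Einstein conditions, every pointwise quantity below will be constant, and each $L^{2}$ pairing will reduce to its integrand times $\Vol(M,g)$.

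For the rough-Laplacian term I would integrate by parts on the closed manifold $M$ to get $\langle\nabla^{*}\nabla h,h\rangle_{L^{2}}=\int_{M}|\nabla h|^{2}\,d\vol_g$. Because $\nabla g=0$ forces $\nabla g_{\mathcal{H}}=-\nabla g_{\mathcal{V}}$, one finds $\nabla h=\frac{n+r}{n}\nabla g_{\mathcal{V}}$. The covariant derivative of the vertical projection is, by O'Neill's formulas with $T=0$, carried entirely by the $A$-tensor: for horizontal $X,Y$ and vertical $U$ one has $(\nabla_{X}P_{\mathcal{V}})U=A_{X}U$ and $(\nabla_{X}P_{\mathcal{V}})Y=-A_{X}Y$, while derivatives in vertical directions vanish. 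Using the skew-symmetry $g(A_{X}U,Y)=-g(A_{X}Y,U)$, the two resulting sums coincide and give $|\nabla g_{\mathcal{V}}|^{2}=2\|A\|^{2}$, hence $|\nabla h|^{2}=\frac{2(n+r)^{2}}{n^{2}}\|A\|^{2}$.

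For the curvature term, since $h$ is diagonal I would reduce $\langle\mathring{R}h,h\rangle$ to $\sum_{P,R}K(e_P,e_R)\lambda_P\lambda_R$, where $\lambda_{\mathcal{V}}=1$, $\lambda_{\mathcal{H}}=-\frac{r}{n}$ and $K$ is the sectional curvature (using Besse's curvature sign convention, in which $R(e_P,e_R,e_P,e_R)$ is the sectional curvature, as is consistent with the Einstein conditions $9.61$ cited above). Splitting into the vertical-vertical, horizontal-horizontal and mixed blocks and feeding in the O'Neill identities for totally geodesic fibers, namely $\sum_{a,b}K(U_a,U_b)=\hat{s}$, $\sum_{i,j}K(X_i,X_j)=\check{s}-3\|A\|^{2}$, and the mixed-scalar identity $\sum_{a,i}K(U_a,X_i)=\|A\|^{2}$, expresses $\langle\mathring{R}h,h\rangle$ as an explicit combination of $\hat{s}$, $\check{s}$ and $\|A\|^{2}$.

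The final, and most delicate, step is the bookkeeping that produces a formula free of $\|A\|^{2}$. Assembling $|\nabla h|^{2}-2\langle\mathring{R}h,h\rangle$ leaves a nonzero multiple of $\|A\|^{2}$ alongside the $\hat{s},\check{s}$ terms, and this multiple does \emph{not} cancel by itself; it must be removed through the Einstein conditions. Tracing the vertical and horizontal Ricci formulas (again with $T=0$) gives $rE=\hat{s}+\|A\|^{2}$ and $nE=\check{s}-2\|A\|^{2}$, where $E$ is the Einstein constant; solving this $2\times 2$ system yields $\|A\|^{2}=\frac{r\check{s}-n\hat{s}}{2r+n}$. Substituting this value collapses all curvature and $A$-tensor contributions into $\frac{2(n+r)}{n^{2}}(r\check{s}-2n\hat{s})$, which after multiplication by $\Vol(M,g)$ is the claimed identity. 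I expect this last elimination, together with correctly tracking the factors $3$ and $2$ in the O'Neill sectional-curvature and Ricci identities and the skew-symmetry of $A$ used twice, to be the main obstacle; once it is done the instability conclusion for $r\check{s}<2n\hat{s}$ is immediate, since $h$ is then a nonzero TT-tensor along which the stability quadratic form is strictly negative.
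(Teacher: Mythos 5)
Your proposal is correct and follows essentially the same route as the paper's proof: both reduce the rough-Laplacian term to $|\nabla \pi^{*}\check{g}|^{2}=2\|A\|^{2}$ (your $|\nabla g_{\mathcal{V}}|^{2}$), use the O'Neill curvature identities for totally geodesic fibers (in particular the horizontal block sum $\check{s}-3\|A\|^{2}$), and then eliminate $\|A\|^{2}$ via the traced Einstein equations $rE=\hat{s}+\|A\|^{2}$ and $nE=\check{s}-2\|A\|^{2}$, exactly as in the paper. The only cosmetic difference is that the paper expands the quadratic form in the basis $\{g,\pi^{*}\check{g}\}$ and uses $\mathring{R}g=\mathrm{Ric}=Eg$ to absorb the vertical and mixed curvature blocks while you compute all three blocks separately; do also include the short verification that $\delta_{g}(\pi^{*}\check{g})=0$ (the paper's Lemma 2.1), which your write-up takes for granted but which is needed for the instability conclusion.
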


The proof of the above theorem together with some applications are given in \S 2.

\medskip

 We like to mention that the instability of Einstein metrics on compact warped product manifolds has been studied in \cite{BHM17}.
 This is a ``dual" situation as these warped product manifolds are actually the total spaces of Riemannian submersions with
 vanishing $A$-tensor. Moreover, the stability of Einstein metrics on some interesting noncompact warped product manifolds
 with $\mathbb{R}$ as the base has been studied in \cite{Kro17} and \cite{Kro18}.

\medskip

Next, we consider the case when the base manifold $(B^{n}, \check{g})$ is a Riemannian product $(B^{n_{1}}_{1}, \check{g}_{1})\times\cdots\times(B^{n_{m}}_{m}, \check{g}_{m}).$  Let $\|A^{(p)}\|$ be the norm of the restriction of the $A$-tensor to the part of the horizontal distribution lifted from the $p$-th base factor.  We then examine the stability of the Einstein metric $g$ along  certain TT-tensors lifted from the base.

\begin{thm}\label{StabilityFromBase}
For any $1\leq p\neq q\leq m$, we have
\begin{eqnarray*}
\hspace{-3cm}\lefteqn{\left\langle(\nabla^{*}\nabla-2\mathring{R})\pi^{*}\left(\frac{1}{n_{p}}\check{g}_{p}-\frac{1}{n_{q}}\check{g}_{q}\right), \,\pi^{*}\left(\frac{1}{n_{p}}\check{g}_{p}-\frac{1}{n_{q}}\check{g}_{q}\right)\right\rangle = }  \\
\hspace{-3cm}  &   &  \hspace{4cm} -\,\frac{2}{n^{2}_{p}}s_{\check{g}_{p}}-\,\frac{2}{n^{2}_{q}}s_{\check{g}_{q}}
+ \,\frac{8}{n^{2}_{p}}\|A^{(p)}\|^{2}+\,\frac{8}{n^{2}_{q}}\|A^{(q)}\|^{2}.
\end{eqnarray*}
\end{thm}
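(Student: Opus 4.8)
The plan is to reduce the stated identity, which is pointwise (the right-hand side consists of the constants attached to the Einstein submersion), to a pair of local curvature computations by exploiting the bilinearity of the operator. Write $h_a := \pi^*\check{g}_a$ for $a\in\{p,q\}$, so that the tensor in question is $h=\frac{1}{n_p}h_p-\frac{1}{n_q}h_q$, and set $Q_{ab}:=\langle(\nabla^*\nabla-2\mathring{R})h_a,h_b\rangle$. Expanding and using the symmetry of $\nabla^*\nabla-2\mathring{R}$ gives
\[
\Big\langle(\nabla^*\nabla-2\mathring{R})h,\,h\Big\rangle=\frac{1}{n_p^2}\,Q_{pp}-\frac{2}{n_pn_q}\,Q_{pq}+\frac{1}{n_q^2}\,Q_{qq}.
\]
Since the asserted answer has no cross term, the theorem is equivalent to the two claims $Q_{pp}=-2s_{\check{g}_p}+8\|A^{(p)}\|^2$ (and symmetrically for $q$) together with the cancellation $Q_{pq}=0$. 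I would establish these in a local orthonormal frame adapted to the submersion, with the horizontal vectors grouped according to the base factor from which they are lifted.

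For the rough-Laplacian part I would use the pointwise Weitzenb\"ock identity $\langle\nabla^*\nabla h,h\rangle=|\nabla h|^2+\tfrac12\nabla^*\nabla|h|^2$. Here $|h_a|^2=n_a$ and $\langle h_p,h_q\rangle=0$ are constant on $M$ (the $h_a$ are the restrictions of $g$ to mutually orthogonal blocks of the horizontal distribution), so the Hessian terms drop out and the contribution reduces to the gradient norms $|\nabla h_p|^2$ and $\langle\nabla h_p,\nabla h_q\rangle$. Because $\check{g}_a$ is parallel on the factor $B_a$, the covariant derivatives of $h_a$ along horizontal directions lifted from $B_a$ and all base-connection terms vanish, and what survives in $\nabla h_a$ is governed entirely by O'Neill's derivative identities $(\nabla_X U)^{\mathcal{H}}=A_XU$ and $(\nabla_U X)^{\mathcal{H}}=A_XU$ (recall $T=0$). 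The outcome is that $|\nabla h_p|^2$ is a positive multiple of $\|A^{(p)}\|^2$ and that $\langle\nabla h_p,\nabla h_q\rangle$ is a multiple of the mixed quantity $\sum_{\alpha,\beta}\|A_{X^{(q)}_\beta}X^{(p)}_\alpha\|^2$.

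For the curvature part I would write $h_a=\sum_\alpha(X^{(a)}_\alpha)^\flat\odot(X^{(a)}_\alpha)^\flat$, so that $\langle\mathring{R}h_a,h_b\rangle=\sum_{\alpha,\beta}R_M\big(X^{(b)}_\beta,X^{(a)}_\alpha,X^{(b)}_\beta,X^{(a)}_\alpha\big)$, and then substitute O'Neill's horizontal curvature formula $R_M(X,Y,X,Y)=\check{R}(X,Y,X,Y)+3\|A_XY\|^2$. In the diagonal case $a=b=p$ the base terms sum to the scalar curvature $s_{\check{g}_p}$ of the $p$-th factor and the $A$-terms to $3\|A^{(p)}\|^2$. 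In the off-diagonal case $a=p,\,b=q$ the base curvature between vectors lifted from \emph{different} product factors vanishes, since $\check{g}$ is a Riemannian product, so only the mixed $A$-term $3\sum_{\alpha,\beta}\|A_{X^{(q)}_\beta}X^{(p)}_\alpha\|^2$ remains.

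Assembling the pieces, the diagonal $\mathring{R}$-contribution supplies $-2s_{\check{g}_p}$ together with a negative multiple of $\|A^{(p)}\|^2$, which the gradient term $|\nabla h_p|^2$ corrects to the required $+8\|A^{(p)}\|^2$; on the off-diagonal the mixed $A$-term from $-2\mathring{R}$ is exactly matched by the one coming from $\langle\nabla h_p,\nabla h_q\rangle$, giving $Q_{pq}=0$. Inserting the weights $\frac{1}{n_p^2}$ and $\frac{1}{n_q^2}$ then yields the claimed formula. The main obstacle is the gradient computation of the basic tensors $h_a$: although $h_a$ looks ``constant,'' it is not parallel on $M$, and one must track its covariant derivative carefully through O'Neill's connection identities, keep straight which indices belong to which factor, and verify both that the diagonal $A$-coefficients combine to $+8$ and that the mixed $A$-terms cancel precisely. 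This computation runs parallel to the one behind Theorem \ref{ProductTypeInstability} in \S2, specialized here to a single base factor.
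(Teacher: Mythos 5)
Your overall strategy---expand bilinearly into $Q_{pp}$, $Q_{pq}$, $Q_{qq}$, pass from $\langle\nabla^{*}\nabla h_{a},h_{b}\rangle$ to $\langle\nabla h_{a},\nabla h_{b}\rangle$ using the constancy of $\langle h_{a},h_{b}\rangle$, and evaluate via O'Neill's formulas---is the same as the paper's (Lemmas \ref{productbase-TT} and \ref{productbase-lemma}). But the mechanism you give for $Q_{pq}=0$ is not correct. If one carries out the gradient computation with a possibly nonzero mixed $A$-tensor, the surviving components of $\nabla\pi^{*}\check{g}_{p}$ are $(\nabla_{X}\pi^{*}\check{g}_{p})(X^{(p)}_{\alpha},V)=g(A_{X}X^{(p)}_{\alpha},V)$ for arbitrary horizontal $X$, together with $(\nabla_{V}\pi^{*}\check{g}_{p})(X^{(p)}_{\alpha},X^{(b)}_{\beta})=g(A_{X^{(b)}_{\beta}}X^{(p)}_{\alpha},V)$ for $b\neq p$. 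This yields $\langle\nabla\pi^{*}\check{g}_{p},\nabla\pi^{*}\check{g}_{q}\rangle=-2\sum_{\alpha,\beta}\|A_{X^{(p)}_{\alpha}}X^{(q)}_{\beta}\|^{2}$, while O'Neill gives $\langle\mathring{R}\pi^{*}\check{g}_{p},\pi^{*}\check{g}_{q}\rangle=-3\sum_{\alpha,\beta}\|A_{X^{(p)}_{\alpha}}X^{(q)}_{\beta}\|^{2}$ (the base term does vanish by the product structure, as you say). Hence $Q_{pq}=(-2+6)\sum_{\alpha,\beta}\|A_{X^{(p)}_{\alpha}}X^{(q)}_{\beta}\|^{2}$: the two mixed contributions enter with coefficients $-2$ and $+6$ and do not cancel. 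Moreover, the diagonal term would then also be contaminated, since $\|\nabla\pi^{*}\check{g}_{p}\|^{2}=2\|A^{(p)}\|^{2}+4\sum_{b\neq p}\sum_{\alpha,\beta}\|A_{X^{(b)}_{\beta}}X^{(p)}_{\alpha}\|^{2}$. What actually makes the theorem true---and what the paper isolates---is that the mixed $A$-tensor $A_{X^{(p)}_{\alpha}}X^{(q)}_{\beta}$ vanishes identically (for the bundles in question the curvature of the connection has no mixed components, so $\mathscr{V}[X^{(p)}_{\alpha},X^{(q)}_{\beta}]=0$); then every mixed term is individually zero and $\|\nabla\pi^{*}\check{g}_{p}\|^{2}=2\|A^{(p)}\|^{2}$ exactly. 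Your argument never identifies this hypothesis and replaces it with a cancellation that does not occur.

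A second, smaller problem is the sign in the curvature formula: you write $R_{M}(X,Y,X,Y)=\check{R}(X,Y,X,Y)+3\|A_{X}Y\|^{2}$, whereas with the conventions fixed by Lemma \ref{CurvatureTerm} it is $\check{R}(X,Y,X,Y)-3\|A_{X}Y\|^{2}$, so that $\langle\mathring{R}\pi^{*}\check{g}_{p},\pi^{*}\check{g}_{p}\rangle=s_{\check{g}_{p}}-3\|A^{(p)}\|^{2}$ and the $-2\mathring{R}$ term contributes $+6\|A^{(p)}\|^{2}$, not a negative multiple. The correct bookkeeping is $2\|A^{(p)}\|^{2}+6\|A^{(p)}\|^{2}=8\|A^{(p)}\|^{2}$; as written, your accounting would force $\|\nabla\pi^{*}\check{g}_{p}\|^{2}=14\|A^{(p)}\|^{2}$, which is false, and with your sign the stated right-hand side of the theorem cannot be reached.
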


We can apply Theorem $\ref{StabilityFromBase}$ to deduce the instability of Einstein metrics on fiber bundles constructed
in \cite{WZ90} and \cite{Wan92}.


\begin{cor} $($cf \cite{Wan16}, Chapter 4$)$ \label{TorusBundleInstability}
The Einstein metrics on the  $T^{r}$ bundles over products of $m$ Fano K\"{a}hler Einstein manifolds constructed in \cite{WZ90} have
coindex at least $m-r$, i.e., there is an $(m-r)$-dimensional subspace of TT-tensors on which
$\langle \nabla^* \nabla h - \mathring{R} h, h \rangle_{L^2(M)} < 0.$
\end{cor}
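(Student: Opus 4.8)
The plan is to apply Theorem~\ref{StabilityFromBase} directly to the $T^r$-bundle construction of \cite{WZ90}, where the base is a Riemannian product $(B_1,\check g_1)\times\cdots\times(B_m,\check g_m)$ of $m$ Fano K\"ahler--Einstein manifolds, each carrying its normalized K\"ahler--Einstein metric. For each ordered pair $p\neq q$, Theorem~\ref{StabilityFromBase} computes the value of $\langle(\nabla^*\nabla-2\mathring R)h_{pq},h_{pq}\rangle$ on the TT-tensor $h_{pq}=\pi^*\!\left(\tfrac{1}{n_p}\check g_p-\tfrac{1}{n_q}\check g_q\right)$. The first step is to show that this quadratic form is negative on each such tensor. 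Since each $B_p$ is Fano K\"ahler--Einstein, its scalar curvature $s_{\check g_p}$ is strictly positive, so the two scalar-curvature terms $-\tfrac{2}{n_p^2}s_{\check g_p}-\tfrac{2}{n_q^2}s_{\check g_q}$ are manifestly negative; I would then need to argue that the positive $A$-tensor contributions $\tfrac{8}{n_p^2}\|A^{(p)}\|^2+\tfrac{8}{n_q^2}\|A^{(q)}\|^2$ cannot overwhelm them. This is where the specifics of the torus-bundle construction enter: I expect to invoke the explicit relation in \cite{WZ90} between $\|A^{(p)}\|^2$ and the K\"ahler class data (the $A$-tensor is built from the curvature forms of the circle bundles, which are proportional to the K\"ahler forms), and to verify that at the Einstein metric the scalar-curvature term dominates in each factor. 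This domination step is the main obstacle, since it requires pinning down the normalization of the $A$-tensor against the normalized scalar curvatures rather than just citing signs.

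Once negativity on each individual $h_{pq}$ is established, the second and more structural step is to extract a subspace of dimension $m-r$ on which the form is negative definite. The tensors $h_{pq}$ are not independent: they all lie in the $m$-dimensional span $V$ of the pullbacks $\pi^*\check g_1,\dots,\pi^*\check g_m$, and within $V$ the trace-free, divergence-free (relative to $g$) condition together with orthogonality to the fiber-scaling direction cuts things down. The plan is to identify the subspace $W\subseteq V$ of TT-tensors of the form $\pi^*\!\left(\sum_p c_p \tfrac{1}{n_p}\check g_p\right)$ that are transverse to diffeomorphisms and to the destabilizing direction of Theorem~\ref{ProductTypeInstability}; the $r$ torus directions impose $r$ linear constraints (one for each circle factor, arising from the harmonicity/divergence conditions tied to the connection), leaving an $(m-r)$-dimensional space. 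The differences $h_{pq}$ span exactly the traceless part, and I would show that the quadratic form is negative on all of $W$, not merely on the spanning tensors, by writing the form on a general element of $W$ as a negative-definite combination — ideally by diagonalizing or by checking the form is a negative-coefficient sum over the factors once the fiber constraint is imposed.

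The final step is bookkeeping to match the coindex statement: I would confirm that the tensors in $W$ are genuinely TT (traceless and divergence-free with respect to $g$, not just $\check g$), which uses that the fibers are totally geodesic and that $\hat g$ is fixed, so that $\pi^*\check g_p$ has constant fiber-trace and its $g$-divergence vanishes by the submersion structure. Then the subspace $W$ of dimension $m-r$ consists of TT-tensors on which the second variation is negative, which is precisely the assertion that the coindex is at least $m-r$. I anticipate the bulk of the genuine difficulty lies in the two intertwined quantitative points: controlling $\|A^{(p)}\|^2$ against $s_{\check g_p}$ using the construction in \cite{WZ90}, and correctly counting the $r$ constraints imposed by the torus bundle so that the negative subspace has the claimed dimension $m-r$ rather than something smaller.
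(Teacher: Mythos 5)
Your proposal assembles the right ingredients (Theorem \ref{StabilityFromBase}, the explicit expression of $\|A^{(p)}\|^2$ in terms of the data of \cite{WZ90}, and the Einstein equations), but it has two genuine gaps, and both concern exactly the points you flag as ``the main obstacle.''

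First, your explanation of where the number $m-r$ comes from is wrong. You propose that the $r$ torus directions impose $r$ linear divergence/harmonicity constraints on the span $V$ of $\pi^*\check g_1,\dots,\pi^*\check g_m$, cutting the TT part of $V$ down to dimension $m-r$. In fact Lemma \ref{productbase-TT} shows that \emph{every} difference $\pi^*(\tfrac{1}{n_p}\check g_p-\tfrac{1}{n_q}\check g_q)$ is already a TT-tensor on $(M,g)$, so the TT-tensors inside $V$ form an $(m-1)$-dimensional space with no further constraint from the torus. The restriction to $m-r$ dimensions is not a constraint count at all: it is forced by the negativity estimate. Concretely, the fiber Einstein equation (the torus fiber is flat, so $\|A\|^2=Er$) gives $E=\tfrac{1}{4r}\sum_j \tfrac{n_jC_{jj}}{x_j^2}$ with $\|A^{(i)}\|^2=\tfrac{n_iC_{ii}}{4x_i^2}$, and after substituting the horizontal Einstein equation $\tfrac{q_i}{x_i}=E+\tfrac{C_{ii}}{2x_i^2}$ into $s_{\check g_i}=\tfrac{n_iq_i}{x_i}$, the $i$-th contribution to the quadratic form is $-\tfrac{2E}{n_i}+\tfrac{C_{ii}}{n_ix_i^2}$ per unit coefficient squared. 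Making this negative requires comparing $\tfrac{C_{ii}}{x_i^2}$ against the \emph{average} $\tfrac{1}{2r}\sum_j\tfrac{n_jC_{jj}}{x_j^2}$, and this comparison only succeeds for enough factors if one first orders them so that $\tfrac{n_1}{x_1^2}C_{11}\leq\cdots\leq\tfrac{n_m}{x_m^2}C_{mm}$ and then uses only the consecutive differences $h_i=\pi^*(\tfrac{\check g_i}{n_i}-\tfrac{\check g_{i+1}}{n_{i+1}})$ for $i=1,\dots,m-r$: the bound $\sum_j\tfrac{n_jC_{jj}}{x_j^2}\geq(m-i+1)\tfrac{n_iC_{ii}}{x_i^2}$ then supplies the factor $\tfrac{m-i+1}{2r}$ needed to dominate $\tfrac{1}{2r}+\tfrac{1}{n_i}$, and this works precisely for $i\leq m-r+1$ (using $n_i\geq 2$ and the telescoping boundary conditions $\mu_0=\mu_{m-r+1}=0$).

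Second, even granting negativity of the quadratic form on each individual $h_{pq}$, this does not give negative definiteness on their span; a quadratic form can be negative on a spanning set and still be indefinite. Your plan acknowledges this but offers no mechanism beyond ``diagonalizing,'' and indeed the cross terms $\langle\mathring R\pi^*\check g_p,\pi^*\check g_q\rangle$ vanish only in the $\pi^*\check g_p$ basis, in which the form is \emph{not} manifestly negative (each diagonal entry is $-\tfrac{2E}{n_i}+\tfrac{C_{ii}}{n_ix_i^2}$, whose sign is not controlled factor by factor for general $r$). The paper's proof resolves this by expanding a general combination $h=\sum\mu_ih_i$ as $\sum_{i=1}^{m-r+1}(\mu_i-\mu_{i-1})\pi^*(\check g_i/n_i)$ and running the ordering estimate on the whole sum at once; the result is an upper bound by $-\mu_1^2\tfrac{C_{11}}{2rx_1^2}-\sum_{i=2}^{m-r}(\mu_i-\mu_{i-1})^2\tfrac{C_{ii}}{2rx_i^2}$, which is negative unless all $\mu_i$ vanish. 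Without the ordering normalization and this global telescoping estimate, your two quantitative worries remain unresolved, so the proof is incomplete as proposed.
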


In particular, in the circle bundle case, i.e., $r=1$, Corollary \ref{TorusBundleInstability} sharpens the corresponding result in \cite{Boh05}, which gives a lower estimate for the {\em sum} of the nullity and coindex at the Einstein metrics.
Corollary 6 in \cite{WZ90} shows that if the topology of the circle bundle is sufficiently complicated, any non-trivial Einstein deformation of a base K\"ahler Einstein factor gives rise to a non-trivial Einstein deformation of the Einstein metric $g$ on the circle bundle. It produces a $TT$-tensor in the null space of the operator $\nabla^{*}\nabla-2\mathring{R}$. Thus some of the Einstein metrics on circle bundles constructed in \cite{WZ90} have nullity already greater than $(m-1)$.
Our analysis of their instability here uses only the Riemannian submersion structure and Einstein equations, and avoids
comparison with the homogeneous subcases and appealing to the variational theory for homogeneous Einstein metrics
developed in \cite{BWZ04} and \cite{Boh04}. Our approach also yields, without appealing to the fundamental conjecture
about positive quaternionic K\"ahler manifolds, the following

\begin{cor}\label{QuaternionicInstability}
The Einstein metrics constructed in \cite{Wan92} on the fiber bundles $($with fiber
$(\SO(3) \times \cdots \times \SO(3))/\Delta \SO(3)$ $)$ over a product of $m$ quaternionic K\"{a}hler
manifolds of positive scalar curvature have coindex at least $m-1$.
\end{cor}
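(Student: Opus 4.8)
The strategy is to exhibit an explicit $(m-1)$-dimensional subspace of TT-tensors on which the quadratic form $Q(h):=\langle(\nabla^{*}\nabla-2\mathring{R})h,h\rangle$ is negative definite; by the discussion in the introduction, such a subspace witnesses coindex $\geq m-1$. Put $u_{p}:=\frac{1}{n_{p}}\pi^{*}\check{g}_{p}$, so that $\tr_{g}u_{p}=1$, and set
\[
V:=\Big\{\textstyle\sum_{p=1}^{m}c_{p}u_{p}\ :\ \sum_{p=1}^{m}c_{p}=0\Big\}.
\]
Since the differences $u_{p}-u_{q}=\pi^{*}(\frac{1}{n_{p}}\check{g}_{p}-\frac{1}{n_{q}}\check{g}_{q})$ are precisely the TT-tensors appearing in Theorem \ref{StabilityFromBase}, and the $\pi^{*}\check{g}_{p}$ are supported on mutually orthogonal horizontal sub-bundles, the space $V$ is an $(m-1)$-dimensional space of TT-tensors, and I only need $Q|_{V}<0$.

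The first step is purely algebraic and is where the additive form of the right-hand side of Theorem \ref{StabilityFromBase} pays off. Writing $f(p):=-\frac{2}{n_{p}^{2}}s_{\check{g}_{p}}+\frac{8}{n_{p}^{2}}\|A^{(p)}\|^{2}$, that theorem reads $Q(u_{p}-u_{q})=f(p)+f(q)$. Let $B$ be the symmetric bilinear form polarizing $Q$ and set $a_{p}:=B(u_{p},u_{p})$; the identity gives $2B(u_{p},u_{q})=a_{p}+a_{q}-f(p)-f(q)$ for $p\neq q$. Expanding $Q(\sum_{p}c_{p}u_{p})=\sum_{p,q}c_{p}c_{q}B(u_{p},u_{q})$ on $V$ and using $\sum_{q\neq p}c_{q}=-c_{p}$ (valid because $\sum_{p}c_{p}=0$), the unknown diagonal contributions $a_{p}$ cancel completely and one is left with
\[
Q\Big(\textstyle\sum_{p}c_{p}u_{p}\Big)=\sum_{p=1}^{m}c_{p}^{2}\,f(p),\qquad \textstyle\sum_{p}c_{p}=0.
\]
Hence $Q|_{V}$ is negative definite the moment $f(p)<0$ for every $p$, that is, as soon as
\[
4\,\|A^{(p)}\|^{2}<s_{\check{g}_{p}}\qquad(1\leq p\leq m).
\]
Note that no knowledge of the diagonal terms $a_{p}$ (equivalently, of $Q$ on the non-TT tensors $u_{p}$) is needed; this is the feature that makes the reduction clean.

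It remains to check the inequality $4\|A^{(p)}\|^{2}<s_{\check{g}_{p}}$ for the Einstein metrics of \cite{Wan92}. In that construction the $p$-th horizontal sub-bundle is the lift of the quaternionic K\"ahler factor $B_{p}$, and $A^{(p)}$ is, up to the universal O'Neill factor, the curvature of the canonical $\SO(3)$-connection carried by the quaternionic structure of $B_{p}$. On a quaternionic K\"ahler manifold this curvature is a fixed multiple of the triple of fundamental $2$-forms, the multiple being proportional to the constant scalar curvature; feeding this into the Einstein equations of 9.61 in \cite{Bes87}, which fix the scale of the fiber metric relative to the base, one evaluates $\|A^{(p)}\|^{2}$ as an explicit (scale-invariant) multiple of $s_{\check{g}_{p}}$. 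I expect the genuine obstacle to lie exactly in this bookkeeping: correctly normalizing the quaternionic K\"ahler curvature identity, carrying along the fiber scaling forced by the Einstein condition through the homogeneous fiber $(\SO(3)\times\cdots\times\SO(3))/\Delta\SO(3)$, and verifying that the resulting constant falls strictly below $\frac{1}{4}$. The key structural point is that this computation uses only the Riemannian submersion data together with the Einstein equations, and therefore never appeals to the classification of positive quaternionic K\"ahler manifolds.
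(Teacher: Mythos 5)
Your first step is correct and is genuinely different from the paper's argument: the paper works with the telescoping combination $h=\sum\mu_i\bigl(\tfrac{\pi^*\check g_i}{4N_i}-\tfrac{\pi^*\check g_{i+1}}{4N_{i+1}}\bigr)$ and estimates the resulting quadratic expression term by term, whereas your polarization identity cleanly eliminates the unknown diagonal values $B(u_p,u_p)$ and reduces everything to the single scalar condition $f(p)<0$, i.e.\ $4\|A^{(p)}\|^{2}<s_{\check g_p}$ for each $p$. That reduction is valid (the cancellation under $\sum_p c_p=0$ checks out), and it is arguably tidier than the paper's computation. One caveat worth recording: this criterion is \emph{sufficient} but not forced by Theorem \ref{StabilityFromBase} alone, and in the torus-bundle setting of Corollary \ref{TorusBundleInstability} with $r\geq 2$ the analogous $f(p)<0$ can fail, which is why the paper resorts to the ordering (\ref{orderednorms}) and the telescoping estimate there. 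In the quaternionic case your criterion does suffice.

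The genuine gap is that you never verify $4\|A^{(p)}\|^{2}<s_{\check g_p}$; you explicitly defer the ``bookkeeping,'' and that bookkeeping is the entire content of the corollary. Moreover, the route you sketch is not quite right: $\|A^{(p)}\|^{2}/s_{\check g_p}$ is \emph{not} a universal constant determined by the quaternionic K\"ahler geometry of $B_p$ alone. From \cite{Wan92} one has
\begin{equation*}
\|A^{(p)}\|^{2}=\frac{6N_pE_p^{2}}{(N_p+2)^{2}x_p^{2}}\,\lambda_p\Bigl(1-\frac{\lambda_p}{\lambda}\Bigr),
\qquad s_{\check g_p}=\frac{4N_pE_p}{x_p},
\end{equation*}
so the ratio depends on the fiber metric parameters $\lambda_1,\dots,\lambda_m$ (through $\lambda=\sum_i\lambda_i$) and on the scales $x_p$, all of which are coupled across the factors by the Einstein system. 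To close the argument one must substitute both the horizontal Einstein equation (eq.\ (2.1) of \cite{Wan92}), which gives $E_p/x_p=E+\|A^{(p)}\|^{2}/(2N_p)$ and turns your inequality into $\|A^{(p)}\|^{2}<2N_pE$, and the vertical equation ((2.5), or (2.3) when $m=2$) to express $E$; the inequality then comes down to $4N_p^{2}\geq 6N_p$, i.e.\ $N_p\geq 2$, which holds precisely because genuine quaternionic K\"ahler factors have dimension at least $8$. This is not a formality: as Remark \ref{4d-base} notes, the estimate fails for $4$-dimensional self-dual Einstein factors ($N_p=1$), so ``the constant falls strictly below $\tfrac14$'' cannot be established without this dimension input and the explicit Einstein-equation substitutions.
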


The proofs of the above results are given in \S 3, together with some remarks regarding their relevance
to constructing ancient solutions of the Ricci flow.

\medskip

In the final section of this paper we study further the case of Riemannian submersion type Einstein metrics on the total
spaces of circle bundles. A particularly interesting and important special case is that of regular Sasaki-Einstein metrics,
which exist on certain special circle bundles over K\"ahler Einstein manifolds. By combining Corollary 6.1 of \cite{Wan17}
and an observation after Example 2.3 in \cite{CHI04}, we show that if the second Betti number $b_{2}$ of the K\"{a}hler
Einstein base is strictly greater than $1$ then the regular Sasaki-Einstein manifold is unstable. This
in particular includes the instability result in Corollary 6.2 of \cite{Wan17}. Thus for the stability of regular
Sasaki-Einstein manifolds, we are reduced to those spaces with Fano K\"{a}hler Einstein bases
with $b_{2}=1$. In a separate article we shall prove the instability of regular Sasaki-Einstein manifolds over
certain irreducible hermitian symmetric spaces and for the Aloff-Wallach spaces.

With regard to the circle bundles in Corollary $\ref{TorusBundleInstability}$ (and also the torus bundles in
Corollary \ref{TorusBundleInstability}), while the base manifolds are the product of Fano K\"{a}hler Einstein
manifolds, the submersed metrics on them are in most cases not Einstein. Indeed, by Theorem 9.76 in \cite{Bes87},
if the base manifold is Einstein in an Einstein principal circle bundle with positive scalar curvature,
the base metric has a compatible almost K\"{a}hler structure.  By the resolution of the Goldberg conjecture
\cite{Sek87}, the base metric is then actually K\"{a}hler Einstein.

It is therefore natural to consider Einstein circle bundles with non-Einstein bases. More precisely,
let $\pi: (M^{n+1}, g)\rightarrow (B^{n}, \check{g})$ be a principal circle bundle with principal connection $\theta$
such that $g(X, Y)=(\pi^{*}\check{g})(X, Y)+\theta(X)\,\theta(Y)$ for any pair of vector fields $X, Y$ on $M$.
Let $\Omega=d\theta$ be the curvature form of the connection $\theta$. Then there exists a closed $2$-form $\omega$ on $B$
such  that $\Omega=\pi^{*}\omega$. Now consider $\check{h}\in C^{\infty}(B, S^{2}(T^{*}B))$ defined by
\begin{equation}
\check{h}(\check{X}, \check{Y})=\sum^{n}_{i=1}\omega(\check{X}, \check{X}_{i})\,\omega(\check{Y}, \check{X}_{i}),
\end{equation}
where $\{\check{X}_{1}, \cdots, \check{X}_{n}\}$ is a local orthonormal basis on $B$.

\begin{thm}\label{CircleBundleInstability}
In the above situation, if $(M^{n+1}, g)$ is Einstein  but $(B^{n}, \check{g})$ is not Einstein, then
$\check{h}-\frac{\|\omega\|^{2}}{n}\check{g}$ is a $TT$-tensor on $B$
whose pull-back is a TT-tensor on $M$. Moreover, we have the formula
\begin{eqnarray*}
\lefteqn{ \left\langle(\nabla^{*}\nabla-2\mathring{R})\pi^{*}\left(\check{h}-\frac{\|\omega\|^{2}}{n}\check{g}\right), \pi^{*}\left(\check{h}-\frac{\|\omega\|^{2}}{n}\check{g}\right)\right\rangle}  \\
& =  &\{2\langle\delta^{\check{\nabla}}d^{\check{\nabla}}\check{h}, \check{h}\rangle-\langle\check{\nabla}^{*}\check{\nabla}\check{h}, \check{h}\rangle+\tr_{\check{g}}(\check{h}\circ \check{h}\circ \check{h})
-\frac{1}{2}\,\|\omega\|^{2}\|\check{h}\|^{2}-\frac{2}{n}\, \|\omega\|^{2}\|\check{h}\|^{2}  \\
&  & +\, \frac{1}{2n}\|\omega\|^{6}+ \,\frac{1}{n^{2}}\|\omega\|^{6}\}\circ\pi
\end{eqnarray*}
\noindent{where} $d^{\check{\nabla}}$ denotes the exterior differential operator acting on differential forms on $B$ with values
in the cotangent bundle $T^{*}B$ $($equipped with the Levi-Civita connection $\check{\nabla}$ of $\check{g}$ $)$,
and $\delta^{\check{\nabla}}$ denotes its formal adjoint operator.
\end{thm}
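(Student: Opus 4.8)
The plan is to push everything down to the base $B$ using O'Neill's formalism together with the Einstein equations, exploiting that $h:=\pi^{*}(\check{h}-\tfrac{\|\omega\|^{2}}{n}\check{g})$ is basic and horizontal. First I would record the submersion data. Since the fibers are unit circles the $T$-tensor vanishes and $A_{X}Y=-\tfrac12(\pi^{*}\omega)(X,Y)\,\xi$, where $\xi$ is the unit vertical field; dually $A_{X}\xi=\tfrac12 JX$, with $J$ the skew endomorphism of $TB$ defined by $\check{g}(J\check{X},\check{Y})=\omega(\check{X},\check{Y})$. Hence $\check{h}=-J^{2}$ (so $\check{h}$ is symmetric and nonnegative) and $\tr_{\check{g}}\check{h}=\|\omega\|^{2}$. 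Feeding $A$ into the O'Neill--Besse Ricci identities for a circle bundle ($\hat{s}=0$, see \cite{Bes87}), the Einstein condition on $M$ yields the vertical-vertical equation $\tfrac14\|\omega\|^{2}=E$ (so $\|\omega\|^{2}$ is constant), the mixed equation $\check{\delta}\omega=0$ (so $\omega$ is harmonic), and the horizontal-horizontal equation $\check{r}=E\,\check{g}+\tfrac12\,\check{h}$, which I will call $(\ast)$. In particular $(B,\check{g})$ is Einstein iff $\check{h}$ is a multiple of $\check{g}$, so the hypothesis that $B$ is not Einstein is precisely what makes $\check{h}-\tfrac{\|\omega\|^{2}}{n}\check{g}$ nonzero.

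For the $TT$-claim on $B$, tracelessness is immediate from $\tr_{\check{g}}\check{h}=\|\omega\|^{2}$. For $\check{\delta}\check{h}=0$ I would differentiate $\check{h}_{ab}=\omega_{ai}\omega_{bi}$: the term carrying $\check{\nabla}^{a}\omega_{ai}$ vanishes because $\omega$ is co-closed, while the remaining term $\omega^{ai}\check{\nabla}_{a}\omega_{bi}$ is killed by combining $d\omega=0$ with the constancy of $\|\omega\|^{2}$ via an index-swap antisymmetrization. To pass to $M$ I would compute $\delta_{g}\pi^{*}\check{k}$ for a general symmetric $\check{k}$ using $T=0$ and $A_{X}Y=-\tfrac12\Omega(X,Y)\xi$: on horizontal output it equals $\pi^{*}(\check{\delta}\check{k})$, which vanishes, and on vertical output the only surviving contribution is proportional to $\sum_{i,j}\omega_{ij}\check{k}_{ij}$, which is zero since $\omega$ is skew and $\check{k}$ symmetric. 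Hence $h$ is $TT$ on $M$.

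For the formula, the decisive simplification is that $h=\pi^{*}\check{k}$ vanishes on vertical vectors, so $\langle(\nabla^{*}\nabla-2\mathring{R})h,h\rangle$ only sees the horizontal-horizontal components of $(\nabla^{*}\nabla-2\mathring{R})h$. For the curvature term this means only the all-horizontal components of $R^{M}$ enter, which by O'Neill's formula equal $\check{R}$ plus quadratic-in-$A$ corrections; contracting gives $\langle\mathring{\check{R}}\check{k},\check{k}\rangle$ together with $A$-contractions. For the rough Laplacian I would split the frame sum into a horizontal part, producing $\pi^{*}(\check{\nabla}^{*}\check{\nabla}\check{k})$ plus corrections coming from $\nabla^{M}-\check{\nabla}$ (the $A$-tensor concealed in the connection), and the single vertical term $-\nabla^{2}_{\xi,\xi}h$. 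Using $\nabla_{\xi}Y=A_{Y}\xi=\tfrac12 JY$, the latter becomes the double commutator $-\tfrac14[[\,\cdot\,,J],J]$ applied to $\check{k}$, whose contraction with $\check{k}$ reduces through $\check{h}=-J^{2}$, $\tr_{\check{g}}\check{h}=\|\omega\|^{2}$, $\tr_{\check{g}}\check{h}^{2}=\|\check{h}\|^{2}$ to the scalars $\tr(\check{h}\circ\check{h}\circ\check{h})$, $\|\omega\|^{2}\|\check{h}\|^{2}$, and $\|\omega\|^{6}$. Assembling all base second-order contributions into an expression in $\langle\check{\nabla}^{*}\check{\nabla}\check{h},\check{h}\rangle$ and $\langle\mathring{\check{R}}\check{h},\check{h}\rangle$, I would then invoke the Weitzenböck identity for $d^{\check{\nabla}},\delta^{\check{\nabla}}$ acting on $\check{h}$; since $\check{\delta}\check{h}=0$ gives $\delta^{\check{\nabla}}\check{h}=0$, this identity lets me trade the curvature term for $2\langle\delta^{\check{\nabla}}d^{\check{\nabla}}\check{h},\check{h}\rangle-\langle\check{\nabla}^{*}\check{\nabla}\check{h},\check{h}\rangle$ up to Ricci contractions, and the latter are removed using $(\ast)$, which turns $\langle\check{r}\circ\check{h},\check{h}\rangle$ into $E\|\check{h}\|^{2}+\tfrac12\tr\check{h}^{3}$. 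Finally I would expand $\check{k}=\check{h}-\tfrac{\|\omega\|^{2}}{n}\check{g}$: the $\check{h}$-$\check{h}$ part gives the first four terms of the stated right-hand side, while the cross term and the pure-trace term supply the remaining $-\tfrac{2}{n}\|\omega\|^{2}\|\check{h}\|^{2}$, $\tfrac{1}{2n}\|\omega\|^{6}$, $\tfrac{1}{n^{2}}\|\omega\|^{6}$ contributions, with every explicit $E$ absorbed through $E=\tfrac14\|\omega\|^{2}$.

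The main obstacle is exactly this last computation: correctly harvesting and combining all the O'Neill $A$-corrections from both the curvature operator and the rough Laplacian, together with the vertical double-commutator, and verifying that after applying $(\ast)$ and $\check{h}=-J^{2}$ the many cubic-in-$\check{h}$ (sextic-in-$\omega$) contractions collapse to the single coefficient of $\tr(\check{h}\circ\check{h}\circ\check{h})$ and to the indicated coefficients of $\|\omega\|^{2}\|\check{h}\|^{2}$ and $\|\omega\|^{6}$, with all $E$-dependence cancelling. This bookkeeping of signs and combinatorial factors, and the precise normalization that produces the coefficient $2$ in front of $\langle\delta^{\check{\nabla}}d^{\check{\nabla}}\check{h},\check{h}\rangle$, is where errors are most likely and where the real work lies.
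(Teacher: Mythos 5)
Your proposal is correct and follows essentially the same route as the paper: relate the stability operator on $M$ to the one on $B$ via O'Neill's formulas for the circle bundle (the paper's Proposition on the stability operators), apply the Weitzenb\"ock formula for $d^{\check{\nabla}},\delta^{\check{\nabla}}$ acting on $\check{h}$ viewed as a $T^{*}B$-valued $1$-form, and eliminate the Ricci contractions using $\check{\Ric}=\tfrac{1}{2}\check{h}+\tfrac{\|\omega\|^{2}}{4}\check{g}$ before expanding $\check{h}-\tfrac{\|\omega\|^{2}}{n}\check{g}$. The only cosmetic difference is your direct verification of $\delta_{\check{g}}\check{h}=0$ from $d\omega=0$, $\delta\omega=0$ and the constancy of $\|\omega\|^{2}$, where the paper instead writes $\check{h}=2\Ric_{\check{g}}-2E\check{g}$ and invokes the contracted Bianchi identity.
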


Recall that a Riemannian metric has {\em harmonic curvature} if its Levi-Civita connection is a Yang-Mills connection
for the associated orthogonal frame bundle of the tangent bundle. Equivalently, a metric $\check{g}$, e.g.,
the metric on the base in a Riemannian submersion,  has harmonic curvature if and only if its Ricci tensor $\Ric_{\check{g}}$
is a Codazzi tensor, i.e., $d^{\check{\nabla}} \Ric_{\check{g}} = 0$. In our situation, the Einstein condition on the base
implies that harmonic curvature for $\check{g}$ is equivalent to $h$ being a Codazzi tensor, i.e., $d^{\check{\nabla}}\check{h}=0$.
A special case of this condition occurs when $\omega$ is parallel, so that $\check{h}$ and therefore $\Ric_{\check{g}}$
are parallel. Then $\check{g}$ would be a local product of Einstein metrics as in the previous corollaries. However,
the class of manifolds with harmonic curvature is strictly larger than the class of local products of
Einstein manifolds, see Chapter 16 of \cite{Bes87}

For this class we obtain the following instability result.

\begin{cor}\label{HarmonicCurvatureBaseInstability}
If the base metric $\check{g}$ has harmonic curvature but is not Einstein, then the Einstein metric $g$
on the circle bundle is unstable.
\end{cor}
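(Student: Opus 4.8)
The plan is to exhibit $\pi^{*}Z$ with $Z:=\check{h}-\tfrac{\|\omega\|^{2}}{n}\check{g}$ as a destabilizing direction and, via Theorem \ref{CircleBundleInstability}, to show that $\langle(\nabla^{*}\nabla-2\mathring{R})\pi^{*}Z,\pi^{*}Z\rangle_{L^{2}(M)}<0$. First I would record the identification $Z=\Ric_{\check{g}}-\tfrac{s_{\check{g}}}{n}\check{g}$, i.e. $Z$ is the trace-free Ricci tensor of the base: the Einstein condition on $M$ gives $\Ric_{\check{g}}=\check{h}+E\check{g}$ for the Einstein constant $E$, and since $\|\omega\|^{2}=\tr_{\check{g}}\check{h}=s_{\check{g}}-nE$ is constant one gets $\tfrac{\|\omega\|^{2}}{n}\check{g}=(\tfrac{s_{\check{g}}}{n}-E)\check{g}$. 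As $\check{g}$ is not Einstein, $Z\neq0$, so by Theorem \ref{CircleBundleInstability} the tensor $\pi^{*}Z$ is a nonzero TT-tensor and it suffices to control the right-hand side of that formula.

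Next I would feed in the harmonic curvature hypothesis. By the discussion preceding the corollary this is equivalent to $d^{\check{\nabla}}\check{h}=0$, so $2\langle\delta^{\check{\nabla}}d^{\check{\nabla}}\check{h},\check{h}\rangle$ vanishes identically on $B$; moreover $\delta^{\check{\nabla}}\check{h}=-d(\tr_{\check{g}}\check{h})=-d\|\omega\|^{2}=0$, so $Z$ is a divergence-free Codazzi tensor. Integrating the remaining terms over $M$ (equivalently over $B$, up to the constant length of the fiber) and using that $\|\omega\|^{2}$ is constant, the $\|\omega\|^{6}$-contributions cancel and the algebraic part collapses, leaving
\[
-\|\check{\nabla}Z\|^{2}_{L^{2}}+\int_{B}\Big(\tr_{\check{g}}(Z\circ Z\circ Z)-\tfrac{(n-2)\|\omega\|^{2}}{2n}\,|Z|^{2}\Big)\,d\vol_{\check{g}}.
\]

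The heart of the argument is to convert the rough Laplacian term by the Bochner formula available under harmonic curvature. Since $\Ric_{\check{g}}$ is a divergence-free Codazzi tensor, commuting covariant derivatives and using the contracted second Bianchi identity gives the pointwise identity $\check{\nabla}^{*}\check{\nabla}\Ric_{\check{g}}=\Ric_{\check{g}}\circ\Ric_{\check{g}}-\mathring{R}\,\Ric_{\check{g}}$ (with respect to $\check{g}$), hence $\check{\nabla}^{*}\check{\nabla}Z=\Ric_{\check{g}}\circ\Ric_{\check{g}}-\mathring{R}\,\Ric_{\check{g}}$ as $\tfrac{s_{\check g}}{n}\check{g}$ is parallel. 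Substituting $-\langle\check{\nabla}^{*}\check{\nabla}Z,Z\rangle$ into the integrand above and expanding with $\Ric_{\check{g}}=Z+\tfrac{s_{\check{g}}}{n}\check{g}$, the cubic and scalar-curvature terms combine cleanly, so that the quadratic form becomes
\[
\int_{B}\Big(\langle\mathring{R}Z,Z\rangle-\tfrac{\|\omega\|^{2}+2E}{2}\,|Z|^{2}\Big)\,d\vol_{\check{g}}.
\]
Thus instability reduces to showing that the base curvature term $\langle\mathring{R}Z,Z\rangle$ is dominated on average by $\tfrac{\|\omega\|^{2}+2E}{2}|Z|^{2}$.

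This last step is the main obstacle, and it is where the circle-bundle geometry enters through positivity. Writing $\omega$ as a skew endomorphism $J$ via $\check{g}$, one has $\check{h}=-J^{2}\succeq0$, hence $\Ric_{\check{g}}=\check{h}+E\check{g}\succeq E\check{g}$, which bounds the eigenvalues of $Z$ from below. When $\check{g}$ is a Riemannian product of Einstein factors $\check{\nabla}Z=0$ and $\langle\mathring{R}Z,Z\rangle$ is computable factor by factor; combined with $\Ric_{\check{g}}\succeq E\check{g}$ this forces strict negativity precisely because $\check{g}$ is not Einstein, recovering the product instability of Corollaries \ref{TorusBundleInstability} and \ref{QuaternionicInstability}. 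The genuinely new difficulty is the Weyl contribution to $\langle\mathring{R}Z,Z\rangle$, which carries no a priori sign; controlling it — using the harmonic Weyl condition $\delta^{\check{\nabla}}W=0$ implied by harmonic curvature together with the lower Ricci bound $\Ric_{\check{g}}\succeq E\check{g}$ — is the technical crux on which the corollary rests.
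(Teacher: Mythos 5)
Your reduction to
\begin{equation*}
-\|\check{\nabla}\check{h}\|^{2}_{L^{2}}+\int_{B}\Bigl(\tr_{\check{g}}(Z\circ Z\circ Z)-\tfrac{(n-2)\|\omega\|^{2}}{2n}\,|Z|^{2}\Bigr)\,d\vol_{\check{g}},
\qquad Z=\check{h}-\tfrac{\|\omega\|^{2}}{n}\check{g},
\end{equation*}
is correct and agrees with the paper (it is an equivalent rewriting of the five algebraic terms in Theorem \ref{CircleBundleInstability}). But from there you take a detour that does not close. The paper simply discards the manifestly nonpositive term $-\|\check{\nabla}\check{h}\|^{2}_{L^{2}}$ and proves that the remaining integrand is \emph{pointwise} nonpositive, and strictly negative somewhere, by pure algebra: at each point $\check{h}=-J^{2}$ has nonnegative eigenvalues occurring in pairs $b_{1},b_{1},\dots,b_{m},b_{m}$ (plus a zero if $n$ is odd) with $2\sum b_{i}=\|\omega\|^{2}=4E$; setting $t_{i}=b_{i}/2E$ the integrand becomes $2(2E)^{3}f(t)$ with $f(t)=\sum t_{i}^{3}-(1+\tfrac{4}{n})\sum t_{i}^{2}+\tfrac{2}{n}+\tfrac{4}{n^{2}}$, which for $n=2m$ factors as $\sum_{i}(t_{i}-\tfrac1m)^{2}(t_{i}-1)\leq 0$ on the simplex $\sum t_{i}=1$, $t_{i}\geq 0$ (and is strictly negative for $n$ odd). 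No curvature of the base enters at all. Your substitution of the Weitzenb\"ock identity goes in the opposite direction: it trades the term $-\langle\check{\nabla}^{*}\check{\nabla}\check{h},\check{h}\rangle$, whose integral already has the favourable sign, for $\langle\mathring{\check{R}}Z,Z\rangle$, which has no a priori sign, and you then acknowledge that bounding the Weyl contribution is ``the technical crux'' without carrying it out. That is the gap: the inequality that actually produces instability is never established, and the route you chose makes it strictly harder than the problem requires. The sharp general bound $\tr(Z^{3})\leq\tfrac{n-2}{\sqrt{n(n-1)}}|Z|^{3}$ for trace-free $Z$ is not enough here; one genuinely needs the paired-nonnegative-eigenvalue structure of $\check{h}$, which you mention only in passing and only in the service of the (unnecessary) curvature estimate.

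Two smaller corrections. The Einstein condition gives $\Ric_{\check{g}}=\tfrac12\check{h}+E\check{g}$, so $Z=2\bigl(\Ric_{\check{g}}-\tfrac{s_{\check{g}}}{n}\check{g}\bigr)$ is \emph{twice} the trace-free Ricci tensor, not the trace-free Ricci tensor itself. And with the paper's conventions the Weitzenb\"ock formula (\ref{Weitzenbock}) for a divergence-free Codazzi tensor yields $\check{\nabla}^{*}\check{\nabla}\check{h}=\mathring{\check{R}}\check{h}-\Ric_{\check{g}}\circ\check{h}$; your version has the signs reversed. Neither slip is fatal to the structure of your argument, but both would need fixing if you pursued the Bochner route; the cleaner fix is to abandon that route and prove the pointwise algebraic inequality directly.
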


In \S 4, by sharpening the analysis for the case where $\check{g}$ is Einstein,
we obtain the following rigidity statement:

\begin{cor}\label{StableCircleBundleRigidity}
If the Einstein metric $g$ on the circle bundle is stable and $\check{g}$ has harmonic curvature, then the base
$(B, \check{g})$ must be a Fano K\"{a}hler Einstein manifold with $b_{2}=1$.
\end{cor}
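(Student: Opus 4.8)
The plan is to run the argument in two stages: first show that stability together with harmonic curvature forces the base to be Fano K\"ahler Einstein, and then use the regular Sasaki--Einstein structure thereby produced to pin down $b_{2}=1$.

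I would begin by applying Corollary \ref{HarmonicCurvatureBaseInstability} in contrapositive form: since $g$ is stable and $\check{g}$ has harmonic curvature, $\check{g}$ cannot be non-Einstein, so $\check{g}$ is Einstein. This is the step in which the instability formula of Theorem \ref{CircleBundleInstability} is really doing the work, converting the comparatively weak harmonic-curvature hypothesis into the Einstein condition on the base.

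Next I would extract positivity of the scalar curvature directly from the O'Neill equations for the Kaluza--Klein metric. Writing $U$ for the unit vertical field, these give $\Ric_{g}(U,U)=c_{1}\|\omega\|^{2}$ on the fiber and $\Ric_{g}(\check{X},\check{Y})=\check{\Ric}(\check{X},\check{Y})-c_{2}\check{h}(\check{X},\check{Y})$ on horizontal vectors, with constants $c_{1},c_{2}>0$. The Einstein condition on $g$ then forces $\omega\neq 0$, whence $E=c_{1}\|\omega\|^{2}>0$; and once $\check{g}$ is known to be Einstein it yields $\check{h}=\frac{\|\omega\|^{2}}{n}\check{g}$ together with $\check{\Ric}=\lambda\check{g}$ for some $\lambda>E>0$. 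The relation $\check{h}\propto\check{g}$ says precisely that the skew-symmetric endomorphism $J_{\omega}$ determined by $\check{g}(J_{\omega}\check{X},\check{Y})=\omega(\check{X},\check{Y})$ has $J_{\omega}^{2}$ equal to a negative multiple of the identity, so after rescaling $\check{g}$ carries a compatible almost K\"ahler structure. This is exactly the hypothesis of Theorem 9.76 in \cite{Bes87}; invoking it together with Sekigawa's resolution of the Goldberg conjecture \cite{Sek87} in the positive scalar curvature range upgrades the structure to an integrable one, so $(B,\check{g})$ is K\"ahler Einstein, and the positivity of $\lambda$ makes it Fano.

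Finally I would determine $b_{2}$. The computation above also shows that $\omega$ is proportional to the K\"ahler form and that $\theta$ is (up to scale) the standard connection, so $(M^{n+1},g)$ is a regular Sasaki--Einstein manifold over the Fano K\"ahler Einstein base. Note that on an Einstein base the tensor $\check{h}-\frac{\|\omega\|^{2}}{n}\check{g}$ of Theorem \ref{CircleBundleInstability} vanishes identically, so any destabilizing direction must now be built not from $\omega$ itself but from harmonic $2$-forms on $B$ that are \emph{not} proportional to the K\"ahler form; such forms exist precisely when $b_{2}(B)>1$. Invoking the instability criterion obtained by combining Corollary 6.1 of \cite{Wan17} with the observation after Example 2.3 in \cite{CHI04}, one concludes that $b_{2}(B)>1$ would force $g$ to be unstable. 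Since $g$ is stable, $b_{2}(B)\leq 1$; and since $B$ is Fano its K\"ahler class is nonzero, so $b_{2}(B)\geq 1$, giving $b_{2}(B)=1$. I expect this last step to be the \emph{main obstacle}: unlike in the non-Einstein case the destabilizing tensors no longer come for free from Theorem \ref{CircleBundleInstability}, and one must construct genuine TT-tensors from the non-K\"ahler harmonic $2$-forms and verify that the second-variation pairing along them is strictly negative, which is the content of the sharpened analysis for the Einstein base.
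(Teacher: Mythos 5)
Your reduction to the Fano K\"ahler Einstein case is exactly the paper's: the contrapositive of Corollary \ref{HarmonicCurvatureBaseInstability} forces $\check{g}$ to be Einstein, the Einstein equations then give $\check{h}=\frac{\|\omega\|^{2}}{n}\check{g}$ so that the rescaled endomorphism $J'=\frac{\sqrt{n}}{\|\omega\|}J$ is an almost complex structure, and Theorem 9.76 of \cite{Bes87} together with Sekigawa's resolution of the Goldberg conjecture \cite{Sek87} upgrades this to a Fano K\"ahler Einstein structure. Where you diverge is the step $b_{2}=1$. You outsource it to the instability of regular Sasaki--Einstein manifolds with $b_{2}>1$ (Corollary 6.1 of \cite{Wan17} plus the observation in \cite{CHI04}), whereas the paper's proof is self-contained at this point: given a real harmonic $(1,1)$-form $\eta$ orthogonal to the K\"ahler form, it sets $\check{h}(\check{X},\check{Y})=\eta(J'\check{X},\check{Y})$, which is a TT-tensor because $J'$ is parallel, uses the Weitzenb\"ock formula for harmonic $2$-forms to get $\check{\nabla}^{*}\check{\nabla}\check{h}-2\mathring{\check{R}}\check{h}=-\frac{n+2}{2n}\|\omega\|^{2}\check{h}$, and then feeds this into Proposition \ref{StabilityOperatorRelation} together with the pointwise bound $\langle\check{h}(J'\cdot,J'\cdot),\check{h}\rangle\leq\|\check{h}\|^{2}$ to obtain $\langle(\nabla^{*}\nabla-2\mathring{R})h,h\rangle\leq-(\tfrac{1}{2}-\tfrac{1}{n})\|\omega\|^{2}\|\check{h}\|^{2}$, negative for $n\geq 4$ (the case $n=2$ being $B=S^{2}$, where $b_{2}=1$ trivially). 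This is precisely the ``sharpened analysis'' you correctly identify as the main obstacle but do not carry out; as written, your argument is a reduction to a cited result rather than a proof of the quantitative core.

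Two points to tighten if you keep the citation route. First, to apply a statement about regular Sasaki--Einstein manifolds you must check that the Einstein circle-bundle metric over the Fano K\"ahler Einstein base really is, up to homothety, regular Sasaki--Einstein: this amounts to verifying that the normalization $\|\omega\|^{2}=4E$ lets you rescale $g$ so that $\omega$ becomes exactly twice the K\"ahler form, making $\nabla U$ square to minus the identity on the horizontal distribution. It is routine but not free. Second, your claim that ``the Einstein condition on $g$ forces $\omega\neq 0$'' is not literally correct (a flat circle bundle over a Ricci-flat base is Einstein); positivity of $E$ is a standing hypothesis of the paper, not a consequence, and without it the conclusion ``Fano'' would fail.
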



\section{\bf Bundle analog of product type instability}

In this section, we will prove Theorem $\ref{ProductTypeInstability}$ and give some applications.

Let $\pi: (M^{n+r}, g)\rightarrow (B^{n}, \check{g})$ be a Riemannian submersion with totally geodesic fibers and $\hat{g}_{b}$ denote the induced metric on the fiber $F_{b}=\pi^{-1}(b)$ for $b\in B$. In the following, let $\{X_{1}, \cdots, X_{n}, V_{1}, \cdots, V_{r}\}$ be a local orthonormal basis in the neighborhood of a fixed but arbitrary point in $M$, where $X_{1}, \cdots, X_{n}$ are basic vector fields, namely horizontal and projectable vector fields, and $V_{1}, \cdots, V_{r}$ are vertical vector fields. Our convention for norms
is to take the full tensor norm, even for totally antisymmetric or symmetric tensors. We will also use the setup and notation
in \cite{Bes87}.

Recall from Proposition 9.61 in \cite{Bes87} that $g$ is an Einstein metric with Einstein constant $E$ iff
\begin{equation}
\check{\delta}A=0,
\end{equation}
\begin{equation}
\Ric_{\hat{g}}(U, V)+(AU, AV)=E g(U, V), \ \  \text{for vertical vector fields} \ \  U \ \  \text{and} \ \ V,
\end{equation}
\begin{equation}
(\pi^{*}\Ric_{\check{g}})(X, Y)-2(A_{X}, A_{Y})=E g(X, Y)  \ \  \text{for horizontal vector fields} \ \  X \ \  \text{and} \ \ Y,
\end{equation}
where $A$ is the O'Neill's $A$-tensor for Riemannian submersions, $(AU, AV):=\sum\limits^{n}_{i=1} \,g(A_{X_{i}}U, A_{X_{i}}V)$ and $(A_{X}, A_{Y}):=\sum\limits^{n}_{i=1} \,g(A_{X}X_{i}, A_{Y}X_{i})$.

If we denote the Einstein constant of $g$ by $E$, the above equations immediately imply that the scalar curvature $\hat{s}$ of $\hat{g}$ and the norm $\|A\|$ of the $A$-tensor are constant on $M$, and that  the scalar curvature $\check{s}$ of $\check{g}$ is constant on $B$. Moreover,
\begin{equation}\label{ATensorEq1}
\hat{s}+\|A\|^{2}=Er,
\end{equation}
and
\begin{equation}\label{ATensorEq2}
\check{s}-2\|A\|^{2}=En,
\end{equation}
where $\|A\|^{2}=\sum\limits^{n}_{i=1}(A_{X_{i}}, A_{X_{i}})=\sum\limits^{r}_{j=1}(AV_{j}, AV_{j})$.

\begin{lem}\label{DivergenceFree}
$h=\frac{1}{r}g-\frac{n+r}{nr}\pi^{*}\check{g}$ is a TT-tensor on $M$.
\end{lem}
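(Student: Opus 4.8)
The plan is to verify the two defining conditions of a TT-tensor for $h=\frac{1}{r}g-\frac{n+r}{nr}\pi^{*}\check{g}$ directly, namely $\tr_{g}h=0$ and $\delta_{g}h=0$. The trace condition is immediate: since $\tr_{g}g=n+r$ and $\tr_{g}\pi^{*}\check{g}=n$ (the pullback metric restricts to $\check{g}$ on the horizontal distribution and vanishes on the vertical one), we get $\tr_{g}h=\frac{n+r}{r}-\frac{n+r}{nr}\cdot n=0$. So the real content is the divergence-free condition, and that is where I expect the actual work to lie.

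For the divergence, I would use linearity: $\delta_{g}h=\frac{1}{r}\delta_{g}g-\frac{n+r}{nr}\delta_{g}(\pi^{*}\check{g})$. The first term vanishes because $g$ is parallel. So everything reduces to computing $\delta_{g}(\pi^{*}\check{g})$, the divergence on $M$ of the pulled-back base metric. I would evaluate $(\delta_{g}\pi^{*}\check{g})(Z)=-\sum_{i}(\nabla_{X_{i}}\pi^{*}\check{g})(X_{i},Z)-\sum_{j}(\nabla_{V_{j}}\pi^{*}\check{g})(V_{j},Z)$ at the fixed point, testing against both horizontal $Z$ and vertical $Z$. The natural tool is the relationship between the Levi-Civita connection $\nabla$ on $M$ and the O'Neill tensors $A$ and $T$; since the fibers are totally geodesic, $T\equiv 0$, which simplifies the formulas considerably. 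Expanding $(\nabla_{W}\pi^{*}\check{g})(W',Z)=W(\pi^{*}\check{g}(W',Z))-\pi^{*}\check{g}(\nabla_{W}W',Z)-\pi^{*}\check{g}(W',\nabla_{W}Z)$, the key is that $\pi^{*}\check{g}$ is insensitive to vertical components, so only the horizontal parts of the various covariant derivatives survive.

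The main obstacle is the careful bookkeeping of which O'Neill-tensor terms appear. When I plug in the standard formulas for $\nabla$ in terms of horizontal and vertical projections (e.g. for horizontal $X,Y$ the vertical part of $\nabla_{X}Y$ is $A_{X}Y$, and for a horizontal $X$ and vertical $V$ the horizontal part of $\nabla_{X}V$ is $A_{X}V$), the terms $\pi^{*}\check{g}(\nabla_{W}W',Z)$ pick up horizontal projections of these. I expect that testing against vertical $Z$ produces a contribution proportional to the vertical divergence of the $A$-tensor, which vanishes by the Einstein condition $\check{\delta}A=0$ (equation in Proposition 9.61), while testing against horizontal $Z$ produces terms that cancel because $\pi^{*}\check{g}$ restricted to the horizontal distribution is essentially the base metric and its covariant derivative along horizontal directions reproduces the base Levi-Civita connection, so the base metric's own parallelism kills them. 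The delicate point is ensuring that the cross terms involving $A$ (from the mismatch between $\nabla$ on $M$ and the horizontal lift of $\check{\nabla}$) either cancel in antisymmetric pairs or are exactly the ones eliminated by $\check{\delta}A=0$. Once these cancellations are confirmed, $\delta_{g}(\pi^{*}\check{g})=0$ follows, and hence $\delta_{g}h=0$, completing the proof that $h$ is a TT-tensor.
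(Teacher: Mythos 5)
Your overall strategy coincides with the paper's: verify $\tr_g h=0$ directly, reduce $\delta_g h=0$ to $\delta_g(\pi^*\check{g})=0$ since $g$ is parallel, and evaluate the latter against horizontal and vertical test fields using the O'Neill formulas with $T\equiv 0$. The one concrete misstep is your anticipated mechanism in the vertical direction. The term that actually survives there is
$\sum_i \pi^*\check{g}(X_i,\nabla_{X_i}V)=\sum_i g(X_i,A_{X_i}V)=-\sum_i g(A_{X_i}X_i,V)$,
which is the pointwise, zeroth-order trace of the $A$-tensor over the horizontal distribution --- not the divergence $\check{\delta}A$. It vanishes because $A_XY=\frac{1}{2}\mathscr{V}[X,Y]$ is alternating in its horizontal arguments, so $A_{X_i}X_i=0$ identically; no Einstein equation is involved. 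In fact none of the Einstein conditions enter the proof at all: the horizontal test reduces to $(\delta_{\check{g}}\check{g})(\pi_*X)\circ\pi=0$ using only $\pi_*(\nabla_{V_j}V_j)=0$ (a consequence of $T=0$), and the vertical test uses only the algebraic antisymmetry of $A$. So if you went hunting for $\check{\delta}A$ in the computation you would not find it, and the lemma in fact holds for any Riemannian submersion with totally geodesic fibers, Einstein or not. With that correction your plan goes through and is essentially the paper's proof.
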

\begin{proof}
Clearly, $\tr_{g}h=0$ and $\delta_{g}g=0$. So it suffices to check that $\delta_{g}(\pi^{*}\check{g})=0$. We show this for horizontal and vertical vector fields separately.

First, for a basic vector field $X$
\begin{eqnarray*}
\lefteqn{\delta_{g}(\pi^{*}\check{g})(X)} \\
&=&-\sum^{n}_{i=1}(\nabla_{X_{i}}\pi^{*}\check{g})(X_{i}, X)-\sum^{r}_{j=1}(\nabla_{V_{j}}\pi^{*}\check{g})(V_{j}, X)\\
&=&-\sum^{n}_{i=1}[X_{i}(\check{g}(\pi_{*}X_{i}, \pi_{*}X)\circ\pi)-\check{g}(\pi_{*}(\nabla_{X_{i}}X_{i}), \pi_{*}X)\circ\pi-\check{g}(\pi_{*}X_{i}, \pi_{*}(\nabla_{X_{i}}X))\circ\pi]\\
& & -\sum^{r}_{j=1}[V_{j}(\check{g}(\pi_{*}V_{j}, \pi_{*}X)\circ\pi)-\check{g}(\pi_{*}(\nabla_{V_{j}}V_{j}), \pi_{*}X)\circ\pi-\check{g}(\pi_{*}V_{j}, \pi_{*}(\nabla_{V_{j}}X))\circ\pi]\\
&=&-\sum^{n}_{i=1}[(\pi_{*}X_{i})(\check{g}(\pi_{*}X_{i}, \pi_{*}X))-\check{g}(\nabla^{\check{g}}_{\pi_{*}X_{i}}\pi_{*}X_{i}, \pi_{X})-\check{g}(\pi_{*}X_{i}, \nabla^{\check{g}}_{\pi_{*}X_{i}}\pi_{*}X)]\circ\pi\\
&=&(\delta_{\check{g}}\check{g})(\pi_{*}X)\circ\pi = 0.
\end{eqnarray*}

Note that in the above we have used the assumption that the $T$-tensor vanishes, which implies $\pi_{*}(\nabla_{V_{j}}V_{j})=0$.

Next, for a vertical vector field $V$ we have
\begin{eqnarray*}
 \delta_{g}(\pi^{*}\check{g})(V)
       & = &-\sum^{n}_{i=1}(\nabla_{X_{i}}\pi^{*}\check{g})(X_{i}, V)-\sum^{r}_{j=1}(\nabla_{V_{j}}\pi^{*}\check{g})(V_{j}, V)\\
&=&-\sum^{n}_{i=1}[X_{i}((\pi^{*}\check{g})(X_{i}, V))-(\pi^{*}\check{g})(\nabla_{X_{i}}X_{i}, V)-(\pi^{*}\check{g})(X_{i}, \nabla_{X_{i}}V)]\\
&  & -\sum^{r}_{j=1}[V_{j}((\pi^{*}\check{g})(V_{j}, V))-(\pi^{*}\check{g})(\nabla_{V_{j}}V_{j}, V)-(\pi^{*}\check{g})(V_{j}, \nabla_{V_{j}}V)]\\
&=&\sum^{n}_{i=1}g(X_{i}, \nabla_{X_{i}}V)\\
&=&-\sum^{n}_{i=1}g(A_{X_{i}}X_{i}, V) = 0 \\
\end{eqnarray*}
since $A_{X}Y=\frac{1}{2}\mathscr{V}[X, Y]$ for any horizontal vector fields $X$ and $Y$, where $\mathscr{V}$
denotes projection onto the vertical subspace.
\end{proof}

\begin{lem}\label{LaplaceTermI}
\begin{equation}
\langle \nabla(\pi^{*}\check{g}), \nabla(\pi^{*}\check{g}) \rangle=2\|A\|^{2}.
\end{equation}
\end{lem}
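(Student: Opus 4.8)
The plan is to reinterpret the symmetric $2$-tensor $\pi^{*}\check{g}$ as the tensor metrically associated to the horizontal projection endomorphism, and then to reduce the left-hand side to the total Hilbert--Schmidt norm of the covariant derivative of that endomorphism, which turns out to be governed entirely by O'Neill's $A$-tensor.

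First I would let $\mathscr{H}$ denote the $g$-orthogonal projection of $TM$ onto the horizontal distribution, so that $\mathscr{H}+\mathscr{V}=\mathrm{Id}$ and $\mathscr{H}^{2}=\mathscr{H}$. Since $\pi$ is a Riemannian submersion, $(\pi^{*}\check{g})(E,F)=g(\mathscr{H}E,\mathscr{H}F)=g(\mathscr{H}E,F)$, the last equality using that $\mathscr{H}$ is self-adjoint and idempotent. Differentiating gives $(\nabla_{Z}\pi^{*}\check{g})(E,F)=g((\nabla_{Z}\mathscr{H})E,F)$, and therefore
\[
\langle \nabla(\pi^{*}\check{g}),\nabla(\pi^{*}\check{g})\rangle=\sum_{\alpha}\|\nabla_{E_{\alpha}}\mathscr{H}\|^{2},
\]
the total squared Hilbert--Schmidt norm of $\nabla\mathscr{H}$ computed over the orthonormal frame $\{X_{1},\dots,X_{n},V_{1},\dots,V_{r}\}$. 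Differentiating $\mathscr{H}^{2}=\mathscr{H}$ shows that each $\nabla_{Z}\mathscr{H}$ interchanges the horizontal and vertical subspaces, so it suffices to evaluate it on horizontal and vertical inputs separately.

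Second I would carry out this evaluation using $\nabla_{Z}\mathscr{H}=-\nabla_{Z}\mathscr{V}$ together with the definitions of the $A$- and $T$-tensors. For horizontal $X$ one finds $(\nabla_{X}\mathscr{H})Y=\mathscr{V}\nabla_{X}Y=A_{X}Y$ on a horizontal field $Y$, and $(\nabla_{X}\mathscr{H})V=-\mathscr{H}\nabla_{X}V=-A_{X}V$ on a vertical field $V$. For vertical $Z$, both values vanish, since the relevant mixed projections of $\nabla$ are precisely the components of the $T$-tensor, which is zero for totally geodesic fibers. Hence only the horizontal differentiation directions $X_{i}$ contribute.

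Finally I would assemble the sum. Collecting the two cases over the frame yields
\[
\langle \nabla(\pi^{*}\check{g}),\nabla(\pi^{*}\check{g})\rangle=\sum_{i,j}\|A_{X_{i}}X_{j}\|^{2}+\sum_{i,k}\|A_{X_{i}}V_{k}\|^{2},
\]
and these two terms are exactly the two equivalent expressions $\sum_{i}(A_{X_{i}},A_{X_{i}})$ and $\sum_{k}(AV_{k},AV_{k})$ for $\|A\|^{2}$ recorded just before the lemma; adding them gives $2\|A\|^{2}$. The main obstacle is the bookkeeping in the second step: one must verify carefully that $\nabla_{Z}\mathscr{H}$ swaps the two distributions, that the vanishing of $T$ removes all vertical differentiation directions, and that the horizontal-to-vertical and vertical-to-horizontal pieces each reproduce one full copy of $\|A\|^{2}$, keeping precise track of the projections and of the skew-adjointness of $A_{X}$.
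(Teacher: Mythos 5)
Your proof is correct and amounts to essentially the same computation as the paper's, repackaged through the covariant derivative of the horizontal projection $\mathscr{H}$ rather than a direct frame expansion of the components of $\nabla(\pi^{*}\check{g})$; the surviving terms and the role of $T=0$ are identified in the same way. The only cosmetic difference is that your factor of $2$ comes from adding the two equivalent expressions $\sum_{i}(A_{X_{i}},A_{X_{i}})$ and $\sum_{k}(AV_{k},AV_{k})$ for $\|A\|^{2}$, whereas in the paper it comes from the symmetry of the two slots of the $2$-tensor.
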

\begin{proof}
\begin{eqnarray*}
\lefteqn{ \langle \nabla\pi^{*}\check{g}, \nabla\pi^{*}\check{g}\rangle } \\
&=&\sum^{n}_{i, k, l=1}[(\nabla_{X_{i}}\pi^{*}\check{g})(X_{k}, X_{l})]^{2}+2\sum^{n}_{i, k=1}\sum^{r}_{l=1}[(\nabla_{X_{i}}\pi^{*}\check{g})(X_{k}, V_{l})]^{2}+\sum^{n}_{i=1}\sum^{r}_{k, l=1}[(\nabla_{X_{i}}\pi^{*}\check{g})(V_{k}, V_{l})]^{2}\\
& & +\sum^{r}_{i=1}\sum^{n}_{k, l=1}[(\nabla_{V_{i}}\pi^{*}\check{g})(X_{k}, X_{l})]^{2}+2\sum^{r}_{i, l=1}\sum^{n}_{k=1}[(\nabla_{V_{i}}\pi^{*}\check{g})(X_{k}, V_{l})]^{2}+\sum^{r}_{i, k, l=1}[(\nabla_{V_{i}}\pi^{*}\check{g})(V_{k}, V_{l})]^{2}\\
&=&2\sum^{n}_{i, k=1}\sum^{r}_{l=1}[-\pi^{*}\check{g}(X_{k}, \nabla_{X_{i}}V_{l})]^{2}+\sum^{r}_{i=1}\sum^{n}_{k, l=1}[V_{i}(\pi^{*}\check{g}(X_{k}, X_{l}))-\pi^{*}\check{g}(\nabla_{V_{i}}X_{k}, X_{l}) -\pi^{*}\check{g}(X_{k}, \nabla_{V_{i}}X_{l})]^{2}\\
&=&2\sum^{n}_{i, k=1}\sum^{r}_{l=1}[-g(X_{k}, \nabla_{X_{i}}V_{l})]^{2}+\sum^{r}_{i=1}\sum^{n}_{k, l=1}[-g(\nabla_{V_{i}}X_{k}, X_{l})-g(X_{k}, \nabla_{V_{i}}X_{l})]^{2}\\
&=&2\sum^{n}_{i, k=1}\sum^{r}_{l=1}[g(\nabla_{X_{i}}X_{k}, V_{l})]^{2}+\sum^{r}_{i=1}\sum^{n}_{k, l=1}[-V_{j}(g(X_{k}, X_{l}))]^{2}\\
&=&2\sum^{n}_{i, k=1}\sum^{r}_{l=1}[g(A_{X_{i}}X_{k}, V_{l})]^{2}\\
&=&2\sum^{n}_{i, k=1}g(A_{X_{i}}X_{k}, A_{X_{i}}X_{k})=2\|A\|^{2}
\end{eqnarray*}
\end{proof}

Next, we recall that, with the curvature convention $R_{X, Y} = \nabla_{[X, Y]} - [\nabla_X, \nabla_Y]$,
the action of curvature on symmetric $2$-tensors is given by $\mathring{R}(h)_{ij} := R_{ikjl} h_{kl}$.
Then by (9.28 f) in \cite{Bes87}, we easily deduce
\begin{lem}\label{CurvatureTerm}
\begin{equation}
\langle\mathring{R}\pi^{*}\check{g}, \pi^{*}\check{g}\rangle=\check{s}-3\|A\|^{2}.
\end{equation}
\end{lem}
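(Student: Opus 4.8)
The plan is to evaluate the pairing directly in the adapted orthonormal frame $\{X_{1},\dots,X_{n},V_{1},\dots,V_{r}\}$ and then invoke O'Neill's curvature formula. First I would observe that, since $\pi$ is a Riemannian submersion, the pulled-back tensor $\pi^{*}\check{g}$ is precisely orthogonal projection onto the horizontal distribution: in the chosen frame its components are $(\pi^{*}\check{g})_{kl}=\delta_{kl}$ when both indices are horizontal and $0$ otherwise. Feeding this into $\mathring{R}(h)_{ij}=R_{ikjl}h_{kl}$ collapses all four index sums onto the horizontal range, giving
\[
\langle\mathring{R}\pi^{*}\check{g},\pi^{*}\check{g}\rangle=\sum_{i,k=1}^{n}R_{ikik},
\]
that is, the \emph{horizontal} scalar curvature of $(M,g)$ (the sum of the sectional curvatures $K_{M}(X_{i},X_{k})$ over distinct horizontal pairs). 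As a sanity check, the identical computation with $\pi^{*}\check{g}$ replaced by $g$ reproduces $\mathring{R}(g)_{ij}=\Ric_{ij}$ and hence $\langle\mathring{R}g,g\rangle=s$, confirming that this contraction indeed records only the horizontal part of the curvature.

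Next I would apply formula (9.28f) of \cite{Bes87}, which expresses each all-horizontal curvature component $(R(X,Y)Z,H)$ of $M$ as the pulled-back base component $(\check{R}(X,Y)Z,H)\circ\pi$ plus three quadratic expressions in the $A$-tensor. Setting $X=Z=X_{i}$ and $Y=H=X_{k}$ and using that $A_{X_{i}}X_{i}=0$ together with the antisymmetry $A_{X_{i}}X_{k}=-A_{X_{k}}X_{i}$ (both immediate from $A_{X}Y=\tfrac{1}{2}\mathscr{V}[X,Y]$), the three $A$-terms collapse to a single multiple of $\|A_{X_{i}}X_{k}\|^{2}$, yielding the familiar O'Neill identity $K_{M}(X_{i},X_{k})=\check{K}(\pi_{*}X_{i},\pi_{*}X_{k})-3\,\|A_{X_{i}}X_{k}\|^{2}$.

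Summing over $1\le i,k\le n$ then finishes the argument. The base terms contribute $\sum_{i\neq k}\check{K}(\pi_{*}X_{i},\pi_{*}X_{k})=\check{s}$, since $\{\pi_{*}X_{1},\dots,\pi_{*}X_{n}\}$ is an orthonormal basis of the base at the projected point. The $A$-terms contribute $-3\sum_{i,k=1}^{n}\|A_{X_{i}}X_{k}\|^{2}=-3\|A\|^{2}$, using the expression $\|A\|^{2}=\sum_{i,k}g(A_{X_{i}}X_{k},A_{X_{i}}X_{k})$ recorded after \eqref{ATensorEq2}. Combining, $\langle\mathring{R}\pi^{*}\check{g},\pi^{*}\check{g}\rangle=\check{s}-3\|A\|^{2}$, as claimed.

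The only genuinely delicate point is the bookkeeping of signs and coefficients: one must verify that the curvature convention $R_{X,Y}=\nabla_{[X,Y]}-[\nabla_{X},\nabla_{Y}]$ used here is exactly the one under which (9.28f) is stated in \cite{Bes87}, so that the three $A$-terms really combine with total coefficient $-3$ (rather than, say, $-1$ or $+3$) and so that the contraction $\sum_{i,k}R_{ikik}$ carries the correct sign relative to $\check{s}$. Everything else reduces to a routine contraction.
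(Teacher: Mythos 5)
Your proof is correct and follows essentially the same route as the paper, which likewise deduces the identity by contracting O'Neill's horizontal curvature formula (9.28f) of \cite{Bes87} against $\pi^{*}\check{g}$ (viewed as the horizontal projection) and using $A_{X}X=0$ together with the definition $\|A\|^{2}=\sum_{i,k}g(A_{X_{i}}X_{k},A_{X_{i}}X_{k})$. Your extra sanity checks on the sign convention and the collapse of the three $A$-terms to the coefficient $-3$ are exactly the points the paper leaves implicit.
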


\medskip

We may now proceed with the

\medskip

\noindent{\em Proof of Theorem $\ref{ProductTypeInstability}$}:

\medskip

\begin{eqnarray*}
\lefteqn{ \int_{M} \left\langle \left(\nabla^{*}\nabla-2\mathring{R}\right)\left(g-\frac{n+r}{n}\pi^{*}\check{g}\right), \left(g-\frac{n+r}{n}\pi^{*}\check{g}\right) \right\rangle d\vol_{g}  } \\
& = & \int_{M}  \left[-2s + 4(n+r) E +\left(\frac{n+r}{n}\right)^{2}\langle\nabla\pi^{*}\check{g}, \nabla\pi^{*}\check{g}\rangle-2\left(\frac{n+r}{n}\right)^{2}\langle\mathring{R}\pi^{*}\check{g}, \pi^{*}\check{g}\rangle \right]d\vol_{g}.
\end{eqnarray*}

Substituting the identities in Lemmas \ref{LaplaceTermI} and \ref{CurvatureTerm} into the above, we obtain

$$  \int_{M} \left[-2s+4(n+r) E +  \left(\frac{n+r}{n}\right)^{2} 2\|A\|^{2}-2\left(\frac{n+r}{n}\right)^{2}(\check{s}-3\|A\|^{2})\right] d\vol_{g}. $$
If we simplify the integrand using equations (\ref{ATensorEq1}) and $(\ref{ATensorEq2})$, then we obtain
upon integration

$$ \Vol(M, g) \left(\frac{2(n+r)}{n^{2}}\right)(r\check{s}-2n\hat{s}) $$
as claimed.

\begin{rmk}  \label{can-var}
In the situation of Theorem \ref{ProductTypeInstability}, if the fiber and base metrics are in addition
Einstein with {\em positive} Einstein constants $\hat{E}$ and $\check{E}$ respectively, then
$$ r\check{s}-2n\hat{s} = rn (\check{E} -2 \hat{E}). $$
So instability of the Einstein metric $g$  is implied by $\check{E} < 2 \hat{E}$. Note that we are in
particular in the situation of the {\em canonical variation} discussed on pp. 253-255 of \cite{Bes87}.
There the auxiliary function $\varphi$ is just the restriction of the functional $\widetilde{\bf S}$
to the metrics lying in the canonical variation, divided by the volume of $g$, which is a constant.
The first graph of $\varphi$ on p. 254 shows that one of the Einstein metrics is always unstable
in the sense of not being a local maximum of the normalized scalar curvature functional along TT-tensor directions.
Thus Theorem \ref{ProductTypeInstability} may be thought of as a generalization of this study to the
more general situation in which the fibers and base are not necessarily Einstein. It also exhibits
an explicit destabilizing TT-tensor.
\end{rmk}

We next give an example to show how Theorem \ref{ProductTypeInstability} can be applied.

\begin{example} We consider the normal homogeneous metric $g$ on $M= \Sp(mq)/(\Sp(q) \times \cdots \times \Sp(q))$
(where there are $m$ factors in the denominator) induced by the negative of the Killing form of $\Sp(mq)$. We shall assume
that $q \geq 1$ and $m \geq 3$. It is known that $g$ is Einstein (\cite{WZ85}, Table IA). Choose an integer $k$ such that
$1 < k < m$. Then we can fiber $M$ over $B_k = \Sp(mq)/(\Sp(kq) \times \Sp(q) \times \cdots \times \Sp(q))$
with fiber $F_k = \Sp(kq)/(\Sp(q) \times \cdots \times \Sp(q))$. This is a Riemannian submersion with totally
geodesic fibers if we equip the fiber and base also by the normal metrics $\hat{g}$ and $\check{g}$ induced by
the negative of the Killing form of $\Sp(mq)$. Notice that $\hat{g}$ is Einstein but $\check{g}$ is not Einstein unless
$k=m-1$, when the base is a symmetric space. Each of the above Riemannian submersions gives rise to a TT-tensor by
Lemma \ref{DivergenceFree}.
To compute the quantity $r\check{s}-2n\hat{s}$ we will use the results in sections 1.2, 2.1 and 2.2 of
\cite{WZ85}.

The dimension $r$ of the fiber is $2k(k-1)q^2$ and the dimension $n$ of the base is $2(m-k)(m+k-1)q^2$.
The fiber is Einstein because the Killing form of $\Sp(mq)$ restricts to $\frac{mq+1}{kq+1}$ times
the Killing form of $\Sp(kq)$ and the irreducible summands in the isotropy representation of the fiber
are all equivalent under outer automorphisms of $\Sp(kq)$  (see Corollary 1.14 in \cite{WZ85}). The
Casimir constants of the irreducible summands with respect to the Killing form of $\Sp(kq)$ are
$\frac{2q+1}{2(kq+1)}$. Hence by (1.7) and (1.10) in \cite{WZ85} the Einstein constant of the metric
$\hat{g}$ of the fiber is given by
$$\hat{\lambda} = \frac{1}{4} \left(1 + \frac{2q+1}{kq+1} \right) \left( \frac{kq+1}{mq+1} \right),$$
and the scalar curvature $\hat{s} = \frac{k(k-1)q^2}{2(mq+1)} (qk + 2q +2)$.

The irreducible summands of the isotropy representation of the base consist of $m-k$ summands of
the form $\nu_{kq} {\otimes} \cdots \otimes \nu_q \otimes \cdots$ (where $\nu_{\ell}$ denotes the
$2\ell$-dimensional symplectic representation of $\Sp(\ell)$ on $\C^{2\ell}$ and $\cdots$ denotes
trivial representations of the corresponding factors) and $\frac{1}{2}(m-k)(m-k-1)$ summands of the form
$\cdots \otimes \nu_q \otimes \cdots \otimes \nu_q \otimes \cdots$. The respective Casimir constants with respect to
the Killing form of $\Sp(mq)$ are
$$ \frac{(k+1)q +1}{2(mq+1)}  \, \,\,\, \mbox{\rm and} \,\,\,\, \frac{2q+1}{2(mq+1)}.$$
So the corresponding eigenvalues of the Ricci tensor of $\check{g}$ are
$$ \frac{1}{4} \left(1 + \frac{kq +q +1}{mq +1} \right) \,\,\,\,\mbox{\rm and} \,\,\,\,
      \frac{1}{4}\left(1 + \frac{2q+1}{mq+1}   \right).$$
We obtain
$$ \check{s} = \frac{q^2 (m-k)}{mq+1} \left((m+k+1)kq + 2k + \frac{m-k-1}{2}\,\,(mq +2q +2) \right).$$
Therefore, the quantity
\medskip

\noindent{$  r\check{s}-2n\hat{s} \,\,= \frac{2k(k-1)(m-k)q^4}{mq +1} \,\, \cdot $}

$ \hspace{1cm}   \left[ (m+k+1)kq + 2k + \frac{m-k-1}{2} \, (mq +2q+2) - (m+k-1)(kq +2q+2) \right]. $

\medskip
\noindent{The factor in} square brackets in the above simplifies to
$$ \frac{1}{2} \left( qm(m-k-3) -2q(k-1) -2m -2(k-1)       \right). $$
If this quantity is negative, then the TT-tensor determined by the Riemannian
submersion gives an unstable variation of $g$ for the functional $\widetilde{\bf S}$.
This is the case in particular if $m-3 \leq k$. However, when $k = m-4$, then
$r\check{s}-2n\hat{s}$ is negative iff we further have $\frac{10(q+1)}{q+4} < m.$

Analogous results hold for the normal homogeneous Einstein spaces
$\SU(mq)/S(\U(q) \times \cdots \times \U(q)), q \geq 2, m \geq 3$
and $\SO(mq)/(\SO(q) \times \cdots \times \SO(q)), q \geq 4, m \geq 3.$ More precisely, for $k \geq 2$,
the projections onto $\SU(mq)/S(\U(kq) \times \U(q) \times \cdots \times \U(q))$ with $m-3 \leq k$
and onto $\SO(mq)/( \SO(kq) \times \SO(q) \times \cdots \times \SO(q))$ with $m-2 \leq k$
give rise to destablizing TT-tensors.
\end{example}


\section{\bf Instability from product structure on the base}

In this section, we will prove Theorem $\ref{StabilityFromBase}$ and its corollaries.

Let $\pi: (M^{n+r}, g)\rightarrow (B^{n}, \check{g})$ be a Riemannian submersion with totally geodesic fibers
where $g$ is Einstein and the base splits as a Riemannian product
$$ (B^{n}, \check{g})=(B^{n_{1}}_{1}, \check{g}_{1})\times\cdots\times(B^{n_{m}}_{m}, \check{g}_{m}). $$
For each $1\leq p\leq m$, let $\{\check{X}^{(p)}_{1}, \cdots, \check{X}^{(p)}_{n_{p}}\}$ be a local orthonormal basis
on $(B^{n_{p}}_{p}, \check{g}_{p})$ and  $\{X^{(p)}_{1}, \cdots, X^{(p)}_{n_{p}}\}$ its horizontal lift to $M$.
Together with vertical vector fields $V_{1}, \cdots, V_{r}$,
$\{X^{(1)}_{1}, \cdots, X^{(1)}_{n_{1}}, \cdots, X^{(m)}_{1}, \cdots, X^{(m)}_{n_{m}}, V_{1}, \cdots, V_{r}\}$
becomes a local orthonormal basis on $(M^{n+r}, g)$.

Analogous computations as those in Lemmas $\ref{DivergenceFree}$, $\ref{LaplaceTermI}$, and
$\ref{CurvatureTerm}$ lead to the following:

\begin{lem}  \label{productbase-TT}
For any $1\leq p, q \leq m$, $\pi^{*}\left(\frac{1}{n_{p}}\check{g}_{p}-\frac{1}{n_{q}}\check{g}_{q}\right)$ is a TT-tensor on $(M, g)$.
\end{lem}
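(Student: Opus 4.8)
The plan is to verify directly the two defining properties of a TT-tensor—tracelessness and vanishing divergence—for the pullback of the given combination, following the template of Lemma~\ref{DivergenceFree}. Since both conditions are linear in the tensor, I would first establish them (where possible) for each summand $\pi^{*}\check{g}_{p}$ separately, viewing $\check{g}_{p}$ as a symmetric $2$-tensor on the product base $B$ extended by zero off the $p$-th factor. The crucial structural input is that $B$ is a Riemannian product: with respect to the adapted basis $\{X^{(s)}_{i}, V_{j}\}$ the tensor $\pi^{*}\check{g}_{p}$ annihilates every vertical direction and every horizontal lift coming from a factor other than $p$, and on $B$ itself $\check{g}_{p}$ is parallel for the product Levi-Civita connection, i.e. $\check{\nabla}\check{g}_{p}=0$, because the product connection preserves the splitting $TB=\bigoplus_{s} TB_{s}$ and restricts to the factor connection on each block.

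For tracelessness, I would evaluate $\tr_{g}\pi^{*}\check{g}_{p}$ against the full orthonormal basis. Only the lifts $X^{(p)}_{1},\dots,X^{(p)}_{n_{p}}$ from the $p$-th factor contribute, giving $\tr_{g}\pi^{*}\check{g}_{p}=\sum_{i=1}^{n_{p}}\check{g}_{p}(\check{X}^{(p)}_{i},\check{X}^{(p)}_{i})=n_{p}$. Hence $\tr_{g}\pi^{*}\!\left(\frac{1}{n_{p}}\check{g}_{p}-\frac{1}{n_{q}}\check{g}_{q}\right)=1-1=0$, which is precisely what the choice of normalizing coefficients $\frac{1}{n_{p}},\frac{1}{n_{q}}$ achieves.

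For the divergence I would show $\delta_{g}(\pi^{*}\check{g}_{p})=0$ by testing against basic and vertical fields separately, mirroring the two computations in Lemma~\ref{DivergenceFree}. On a basic field $X$, the vertical part of the sum drops out because the vanishing $T$-tensor forces $\pi_{*}(\nabla_{V_{j}}V_{j})=0$, and the horizontal part collapses to $(\delta_{\check{g}}\check{g}_{p})(\pi_{*}X)\circ\pi$, which vanishes since $\check{\nabla}\check{g}_{p}=0$. On a vertical field $V$, the vertical–vertical terms vanish identically because $\pi^{*}\check{g}_{p}$ kills vertical directions, and the product structure restricts the surviving horizontal sum to the $p$-th factor, leaving $\delta_{g}(\pi^{*}\check{g}_{p})(V)=\sum_{i=1}^{n_{p}} g\!\left(X^{(p)}_{i},A_{X^{(p)}_{i}}V\right)$. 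Using the skew-symmetry identity $g(A_{X}V,Y)=-g(A_{X}Y,V)$ for horizontal $X,Y$ and vertical $V$, each term becomes $-g\!\left(A_{X^{(p)}_{i}}X^{(p)}_{i},V\right)$, and this vanishes because $A_{X}X=\frac{1}{2}\mathscr{V}[X,X]=0$.

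The main obstacle here is bookkeeping rather than anything conceptual: one must deploy the product decomposition of $B$ carefully to confirm that cross-factor contributions drop out of both the trace and the horizontal divergence sums, and to see that $\check{g}_{p}$ remains divergence-free on $B$—a point that rests on $\check{\nabla}\check{g}_{p}=0$. Once each $\pi^{*}\check{g}_{p}$ is shown to be divergence-free, linearity yields $\delta_{g}\pi^{*}\!\left(\frac{1}{n_{p}}\check{g}_{p}-\frac{1}{n_{q}}\check{g}_{q}\right)=0$, and combined with the trace computation this establishes that the tensor is TT.
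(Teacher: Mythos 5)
Your proposal is correct and follows essentially the same route the paper takes: the paper derives Lemma \ref{productbase-TT} by declaring the computations "analogous" to those of Lemma \ref{DivergenceFree}, and your argument is precisely that analogue carried out factor by factor, using that each $\check{g}_{p}$ is parallel on the product base (so $\delta_{\check{g}}\check{g}_{p}=0$), that $T=0$ kills the vertical contributions, and that $A_{X}X=0$ handles the divergence against vertical fields, with the normalizations $\frac{1}{n_{p}},\frac{1}{n_{q}}$ giving tracelessness.
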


\begin{lem}  \label{productbase-lemma}
For any $1\leq p\neq q\leq m$,
\begin{equation}
\langle\nabla\pi^{*}\check{g}_{p}, \nabla\pi^{*}\check{g}_{p}\rangle=2\|A^{(p)}\|^{2},
\end{equation}

\begin{equation}
\langle\nabla\pi^{*}\check{g}_{p}, \nabla\pi^{*}\check{g}_{q}\rangle=0,
\end{equation}

\begin{equation}
\langle \mathring{R}(\pi^* \check{g}_p), \pi^* \check{g}_q \rangle = 0, \,\,
   \langle \mathring{R}(\pi^* \check{g}_p), \pi^* \check{g}_p \rangle = s_{\check{g}_p} - 3 \|A^{(p)}\|^2,
\end{equation}

\begin{equation}
\left\langle\mathring{R}\pi^{*}\left(\frac{1}{n_{p}}\check{g}_{p}-\frac{1}{n_{q}}\check{g}_{q}\right), \left(\frac{1}{n_{p}}\check{g}_{p}-\frac{1}{n_{q}}\check{g}_{q}\right)\right\rangle =
\frac{s_{\check{g}_{p}}}{n^{2}_{p}}+\frac{s_{\check{g}_{q}}}{n^{2}_{q}}
-\frac{3}{n^{2}_{p}}\|A^{(p)}\|^{2}-\frac{3}{n^{2}_{q}} \|A^{(q)}\|^{2},
\end{equation}
where $\|A^{(p)}\|^{2} :=\sum^{n_{p}}_{i, j=1}g(A_{X^{(p)}_{i}}X^{(p)}_{j}, A_{X^{(p)}_{i}}X^{(p)}_{j}).$
\end{lem}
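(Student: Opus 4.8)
The plan is to repeat the computations behind Lemmas \ref{LaplaceTermI} and \ref{CurvatureTerm}, now keeping track of which base factor each horizontal vector is lifted from and exploiting the block structure forced by the Riemannian product $\check{g}=\check{g}_{1}\oplus\cdots\oplus\check{g}_{m}$. A convenient reformulation is to write $\pi^{*}\check{g}_{p}=g(P_{p}\,\cdot\,,\,\cdot\,)$, where $P_{p}$ is the $g$-orthogonal projection of $TM$ onto $\mathcal{H}_{p}$, the horizontal lift of $TB_{p}$; since $\nabla g=0$ this gives $(\nabla_{E}\pi^{*}\check{g}_{p})(\,\cdot\,,\,\cdot\,)=g((\nabla_{E}P_{p})\,\cdot\,,\,\cdot\,)$, so that $\langle\nabla\pi^{*}\check{g}_{p},\nabla\pi^{*}\check{g}_{q}\rangle=\sum_{\alpha}\langle\nabla_{E_{\alpha}}P_{p},\nabla_{E_{\alpha}}P_{q}\rangle$ for the Hilbert--Schmidt inner product on endomorphisms. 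The first task is to identify $\nabla_{E}P_{p}$.

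To compute $\nabla_{E}P_{p}$ I would use O'Neill's submersion formulas in \cite{Bes87} together with $T=0$. Since $P_{p}$ is symmetric, $\nabla_{E}P_{p}$ interchanges $\mathcal{H}_{p}$ and its orthogonal complement $\mathcal{H}_{p}^{\perp}=\big(\bigoplus_{p'\neq p}\mathcal{H}_{p'}\big)\oplus\mathcal{V}$, and its entries are $g((\nabla_{E}P_{p})X^{(p)}_{a},Z)=g(\nabla_{E}X^{(p)}_{a},Z)$ for $X^{(p)}_{a}\in\mathcal{H}_{p}$ and $Z\perp\mathcal{H}_{p}$. These split into $(\mathcal{H}_{p},\mathcal{V})$-entries $g(\nabla_{X_{i}}X^{(p)}_{a},V_{l})=g(A_{X_{i}}X^{(p)}_{a},V_{l})$, exactly as in Lemma \ref{LaplaceTermI}, and $(\mathcal{H}_{p},\mathcal{H}_{p'})$-entries with $p'\neq p$. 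In the latter, the horizontal-derivative part $g(\nabla_{X_{i}}X^{(p)}_{a},X^{(p')}_{b})$ projects to $\check{g}(\check{\nabla}_{\check{X}_{i}}\check{X}^{(p)}_{a},\check{X}^{(p')}_{b})$ and vanishes, because on a Riemannian product $\check{\nabla}_{\check{X}_{i}}\check{X}^{(p)}_{a}$ stays in the $p$-th factor; this is exactly where the product structure of the base enters.

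The two divergence-type identities then follow by matching supports. For $p\neq q$ the nonzero entries of $\nabla P_{p}$ occupy blocks meeting $\mathcal{H}_{p}$ while those of $\nabla P_{q}$ meet $\mathcal{H}_{q}$, so the Hilbert--Schmidt pairing $\sum_{\alpha}\langle\nabla_{E_{\alpha}}P_{p},\nabla_{E_{\alpha}}P_{q}\rangle$ has no surviving terms and equals $0$; for $p=q$ the $(\mathcal{H}_{p},\mathcal{V})$-entries reproduce the computation of Lemma \ref{LaplaceTermI} restricted to the $p$-th block and yield $2\|A^{(p)}\|^{2}$. The two curvature identities are obtained in the same way from (9.28f) of \cite{Bes87}, as in Lemma \ref{CurvatureTerm}: the curvature and Ricci data of $\check{g}$ are block-diagonal on the product, so $\langle\mathring{R}\pi^{*}\check{g}_{p},\pi^{*}\check{g}_{q}\rangle$ retains only same-block contributions, giving $0$ for $p\neq q$ and $s_{\check{g}_{p}}-3\|A^{(p)}\|^{2}$ for $p=q$. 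The fourth identity is then pure bilinearity: expanding $\langle\mathring{R}\pi^{*}(\tfrac{1}{n_{p}}\check{g}_{p}-\tfrac{1}{n_{q}}\check{g}_{q}),\pi^{*}(\tfrac{1}{n_{p}}\check{g}_{p}-\tfrac{1}{n_{q}}\check{g}_{q})\rangle$ and inserting the three preceding evaluations, the mixed term drops out by the $p\neq q$ curvature identity and one reads off the stated expression.

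The main obstacle I anticipate is the off-diagonal part of the $A$-tensor between distinct base factors, namely the terms $A_{X^{(p)}_{a}}X^{(p')}_{b}$ with $p\neq p'$. These are \emph{not} removed by the product structure of the base alone --- integrability of $\mathcal{H}_{p}\oplus\mathcal{V}$ governs brackets within a single factor but not across factors --- and a direct bookkeeping leaves residual contributions of the form $\sum|A_{X^{(p)}_{a}}X^{(p')}_{b}|^{2}$ in both divergence identities (arising from the vertical-derivative entries $(\nabla_{V_{j}}P_{p})X^{(p)}_{a}$). The heart of the argument is thus to show that these cross-factor $A$-contributions are absent from the final formulas, which holds when the connection curvature respects the base splitting so that $A$ is block-diagonal; this is the case for the submersions of interest. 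I would therefore isolate the vanishing of $A_{X^{(p)}}X^{(p')}$ for $p\neq p'$ as a preliminary observation and only then assemble the four identities.
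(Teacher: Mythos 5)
Your proposal follows essentially the same route as the paper: the paper gives no separate proof of this lemma, saying only that the computations of Lemmas \ref{DivergenceFree}, \ref{LaplaceTermI} and \ref{CurvatureTerm} carry over, with the product structure entering through the vanishing of covariant derivatives and brackets --- \emph{including the $A$-tensor} --- between vectors lifted from distinct factors. The obstacle you single out is therefore exactly the point the paper disposes of by assertion: the vanishing of $A_{X^{(p)}}X^{(q)}$ for $p\neq q$ does not follow from the Riemannian product structure of the base alone (nor from the Einstein condition, which only yields $\sum_{i} g(A_{X^{(p)}}X_{i}, A_{X^{(q)}}X_{i})=0$); it is an additional property that must be verified, and it does hold in the applications of \S 3 because the connection curvature there is a sum of $2$-forms pulled back from the individual base factors. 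Your decision to isolate this vanishing as a preliminary observation is the right one. One bookkeeping correction: the residual cross-factor terms enter not only through the vertical-derivative entries $(\nabla_{V_{j}}P_{p})$ but also through the horizontal-derivative $(\mathcal{H}_{p},\mathcal{V})$-entries $g(A_{X_{i}}X^{(p)}_{a},V_{l})$ when the differentiation direction $X_{i}$ is lifted from a factor other than $B_{p}$, and likewise through the O'Neill $A$-terms in $\langle\mathring{R}\pi^{*}\check{g}_{p},\pi^{*}\check{g}_{q}\rangle$; all of these are killed by the same block-diagonality of $A$, so your conclusion is unaffected.
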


The Riemannian product structure enters in the computations through the vanishing of covariant
derivatives and brackets (including the $A$-tensor) involving vectors $\check{X}^{(p)}_{i}$ and $\check{X}^{(q)}_{j}$ with $p \neq q$.
The above lemmas in turn imply Theorem $\ref{StabilityFromBase}$ in a straightforward manner.

Now we apply Theorem $\ref{StabilityFromBase}$ to some examples.

\subsection{Proof of Corollary \ref{TorusBundleInstability}}
In this case let $(B^{n_{i}}_{i}, g_{i})$, $i=1, \cdots, m$, be K\"{a}hler Einstein manifolds with $c_{1}(B_{i})>0$ and real dimension $n_{i}$. Then one can write $c_{1}(B_{i})=q_{i}\alpha_{i}$ with $\alpha_{i}$ an indivisible integral cohomology class and $q_i > 0$.
As in \cite{WZ90} we normalize $g_{i}$ such that the de Rham class $[\omega_{i}]=2\pi \alpha_{i}$, where $\omega_{i}$ is the K\"{a}hler form of $g_{i}$. This is equivalent to the condition $\Ric_{g_{i}}=q_{i}g_{i}$.

Consider a principal torus $T^{r}$ bundle $\pi: M\rightarrow B=B_{1}\times \cdots \times B_{m}$ that is classified by $r$ integral cohomology classes
$$ \chi_{\beta} = \sum_{i=1}^m \, b_{\beta i} \pi^{*}_{i}\alpha_i, \,\,\,\, 1 \leq \beta \leq r,$$
where $\pi_{i}: B\rightarrow B_{i}$ is the projection onto the $i$-th factor. (Note that this presumes a fixed splitting of the torus as a product of circles.)
We will essentially use the notation in section 1 of \cite{WZ90} except that we will use the full tensor norm for $2$-forms, as we have already indicated before, and use $(\hat{g}_{\alpha \beta})$ to denote
the metric on the torus fibers. In particular the $r \times m$ matrix $(b_{\alpha i})$ has full rank with $r \leq m$.
If $r=m\geq2$, the Einstein metric constructed on $M$ is actually a product Einstein metric, which is unstable.
Also, if the $k$-th column of $(b_{\alpha i})$ is zero, then $M$ is a Riemannian product of a torus bundle
over the product of the remaining base factors and the $k$-th K\"ahler Einstein factor, and hence is again
unstable. We shall therefore assume that all columns in $(b_{\alpha i})$ are nonzero.

It will be convenient to denote the $j$-th column of $(b_{\alpha i})$ by $C_j$ and the inner products
between the columns with respect to the fiber metric by $C_{jk}$, i.e.,
$$  C_{jk} := \sum_{\alpha, \beta} \, b_{\alpha j} \hat{g}_{\alpha \beta} b_{\beta k}.$$
Notice that $C_{jj}$ are positive as $C_j$ are nonzero vectors by assumption. Then by possibly permuting
the K\"ahler Einstein factors we may further assume that
\begin{equation} \label{orderednorms}
 \frac{n_1}{x_1^2}\, C_{11} \leq \cdots \leq \frac{n_m}{x_m^2}\, C_{mm}.
\end{equation}

Let $\check{g}_i = x_i g_i$ with $x_i > 0$, so that its scalar curvature $s_{\check{g}_i} = \frac{n_i q_i}{x_i}$. Recall that the Riemannian submersion structure of the Einstein metric $g$ is  given by a product
metric $\check{g} = \check{g}_1 \oplus \cdots \oplus \check{g}_m$ on the base,
and an $\R^r$-valued connection $\theta$ with $\check{g}$-harmonic curvature form
$\Omega = \sum_{\alpha=1}^r \, \sum_{i=1}^m \, b_{\alpha i} \pi_i^*\omega_i e_{\alpha}$.
The harmonicity condition is independent of the choices $x_i$.
It then follows from the relation $\theta(A_X Y) = -\frac{1}{2} \Omega(X, Y)$ that
$$ \frac{1}{4} \frac{n_i}{x_i^2} C_{ii} =  \sum_{j, k = n_{i-1}+ 1}^{n_{i-1} + n_i} \, g(A_{X_j}X_k, A_{X_j}X_k)
        =  \|A^{(i)} \|^2.$$

Let $ h_i :=\pi^*( \frac{\check{g}_i}{n_{i}} - \frac{\check{g}_{i+1} }{n_{i+1}}), i=1, \cdots, m-r$.
These $2$-tensors are linearly independent, and they span an $(m-r)$-dimensional subspace of the TT-tensors by Lemma \ref{productbase-TT}.
Consider $h = \mu_1 h_1 + \cdots + \mu_{m-r} h_{m-r}$ where $\mu_i \in \R$. Writing $h$ in the form
$\sum^{m-r+1}_{i=1}(\mu_{i}-\mu_{i-1})\pi^{*}\left(\frac{\check{g}_{i}}{n_{i}}\right)$ with $\mu_{0}=\mu_{m-r+1}=0$, and applying Lemma \ref{productbase-lemma} and equations (1.11)-(1.12) of \cite{WZ90}, we obtain
\begin{eqnarray*}
\langle\nabla h, \nabla h\rangle-2\langle\mathring{R}h, h\rangle
&=& -\sum^{m-r+1}_{i=1}(\mu_{i}-\mu_{i-1})^{2}\frac{2q_{i}}{n_{i}x_{i}}
+\sum^{m-r+1}_{i=1}(\mu_{i}-\mu_{i-1})^{2}\frac{2C_{ii}}{n_{i}x^{2}_{i}}\\
&=&-\sum^{m-r+1}_{i=1}(\mu_{i}-\mu_{i-1})^{2}\frac{2}{n_{i}}\left(E+\frac{C_{ii}}{2x^{2}_{i}}\right)
+\sum^{m-r+1}_{i=1}(\mu_{i}-\mu_{i-1})^{2}\frac{2C_{ii}}{n_{i}x^{2}_{i}}\\
&=& -\sum^{m-r+1}_{i=1}(\mu_{i}-\mu_{i-1})^{2}\frac{2}{n_{i}}E
+\sum^{m-r+1}_{i=1}(\mu_{i}-\mu_{i-1})^{2}\frac{C_{ii}}{n_{i}x^{2}_{i}}\\
&=& -\sum^{m-r+1}_{i=1}\left((\mu_{i}-\mu_{i-1})^{2}\frac{2}{n_{i}}\frac{1}{4r}\sum^{m}_{j=1}\frac{C_{jj}n_{j}}{x^{2}_{j}}\right)
+\sum^{m-r+1}_{i=1}(\mu_{i}-\mu_{i-1})^{2}\frac{C_{ii}}{n_{i}x^{2}_{i}}.
\end{eqnarray*}

For $\sum^{m}_{j=1}\frac{C_{jj}n_{j}}{x^{2}_{j}}$ in the $i$-th term in the first sum above, by applying $(\ref{orderednorms})$ we have
$$\sum^{m}_{j=1}\frac{C_{jj}n_{j}}{x^{2}_{j}}\geq\sum^{m}_{j=i}\frac{C_{jj}n_{j}}{x^{2}_{j}}\geq(m-i+1)\frac{C_{ii}n_{i}}{x^{2}_{i}}.$$
By substituting this into the first sum above, we obtain
\begin{eqnarray*}
\langle\nabla h, \nabla h\rangle-2\langle\mathring{R}h, h\rangle
&\leq& -\sum^{m-r+1}_{i=1}(\mu_{i}-\mu_{i-1})^{2} \,\frac{m-i+1}{2r}\frac{C_{ii}}{x^{2}_{i}}
+\sum^{m-r+1}_{i=1}(\mu_{i}-\mu_{i-1})^{2}\frac{C_{ii}}{n_{i}x^{2}_{i}}\\
&=& -\sum^{m-r}_{i=1}(\mu_{i}-\mu_{i-1})^{2}\frac{1}{2r}\frac{C_{ii}}{x^{2}_{i}}
-\sum^{m-r}_{i=1}(\mu_{i}-\mu_{i-1})^{2} \,\frac{m-i}{2r}\frac{C_{ii}}{x^{2}_{i}}\\
& & -\mu^{2}_{m-r} \,\frac{1}{2}\frac{C_{(m-r+1)(m-r+1)}}{x^{2}_{m-r+1}}
+\sum^{m-r+1}_{i=1}(\mu_{i}-\mu_{i-1})^{2}\frac{C_{ii}}{n_{i}x^{2}_{i}}\\
&=& -\mu^{2}_{1} \,\frac{1}{2r}\frac{C_{11}}{x^{2}_{1}}-\sum^{m-r}_{i=2}(\mu_{i}-\mu_{i-1})^{2}\frac{1}{2r}\frac{C_{ii}}{x^{2}_{i}}\\
& & -\sum^{m-r}_{i=1}(\mu_{i}-\mu_{i-1})^{2}\left(\frac{m-i}{2r}-\frac{1}{n_{i}}\right)\frac{C_{ii}}{x^{2}_{i}}\\
& & -\mu^{2}_{m-r} \,\left(\frac{1}{2}-\frac{1}{n_{m-r+1}}\right)\frac{C_{(m-r+1)(m-r+1)}}{x^{2}_{m-r+1}}\\
& \leq & -\mu^{2}_{1} \,\frac{1}{2r}\frac{C_{11}}{x^{2}_{1}}-\sum^{m-r}_{i=2}(\mu_{i}-\mu_{i-1})^{2}\frac{1}{2r}\frac{C_{ii}}{x^{2}_{i}},
\end{eqnarray*}
since $n_{i}\geq2$ and $\mu_{0}=\mu_{m-r+1}=0$. This is negative unless all the $\mu_{i}$ vanish, since all the $C_{ii}>0$. This completes the proof of Corollary \ref{TorusBundleInstability}. \hspace{9cm} $\Box$

\subsection{Proof of Corollary \ref{QuaternionicInstability}}
For this case we basically follow the notation in \cite{Wan92}. Recall that the base factors
$(B_i^{n_i}, g_i)$ are quaternionic K\"ahler manifolds with positive scalar curvature and
$\Ric_{g_i} = E_i g_i$. We will let $n_i = 4N_i$, a compromise between our notation and that
in \cite{Wan92}. The metric on the base is given by $\check{g} = x_1 g_1 \oplus \cdots \oplus
x_m g_m$ where $x_i > 0$ and $\check{g}_i = x_i g_i$. The connection defining the Riemannian submersion
structure comes from projection of the product of the $\so(3)$-valued Yang-Mills connections on the canonical
$\SO(3)$-bundle over $B_i$. The metric on the fibers $(\SO(3) \times \cdots \times \SO(3))/ \Delta \SO(3)$
is the normal metric induced from the biinvariant metric $\lambda_1 B_{\so(3)} \oplus \cdots \oplus \lambda_m B_{\so(3)}$,
where $B_{\so(3)}$ denotes the negative of the Killing form of $\so(3)$.

Using the formulas on pp. 310-311 of \cite{Wan92}, one easily derives
$$ \| A^{(i)} \|^2 = \sum_{j, k =1}^{4N_i} \, g(A_{X_j} X_k, A_{X_j} X_k)  =
    \frac{3}{2} \frac{4N_iE_i^2}{(N_i +2)^2 x_i^2} \,\lambda_i \left(1 - \frac{\lambda_i}{\lambda}\right)  $$
where $\{X_1, \cdots, X_{4N_i} \}$ is a $\check{g}_i$-orthonormal basis of basic vector fields
associated to factor $B_i$ in the base and $\lambda = \lambda_1 + \cdots + \lambda_m$.

We choose two different indices
and consider the TT-tensor $h :=  \pi^*(\frac{\check{g}_i}{4N_i} - \frac{\check{g}_j}{4N_j})$.
Without loss of generality, we may let $i=1$ and $j=2$. After applying Lemma \ref{productbase-lemma}
and some simplication we get
$$ \langle \nabla h, \nabla h \rangle - 2 \langle \mathring{R} h, h \rangle =
      - \, \frac{1}{2N_1} \frac{E_1}{x_1} - \, \frac{1}{2N_2} \frac{E_2}{x_2} + \frac{3}{N_1} \frac{E_1^2}{x_1^2 (N_1 + 2)^2}\, \lambda_1 \left(1 - \frac{\lambda_1}{\lambda} \right) $$
$$  \hspace{1cm}  +  \,\, \frac{3}{N_2} \frac{E_2^2}{x_2^2 (N_2 + 2)^2}\, \lambda_2 \left(1 - \frac{\lambda_2}{\lambda} \right). $$
Substituting equation (2.1) from \cite{Wan92} into the first two terms of the above and simplifying, we
obtain
$$ - \frac{E}{2N_1}  - \frac{E}{2N_2} + \frac{3}{2N_1} \frac{E_1^2}{(N_1 + 2)^2} \frac{\lambda_1}{x_1^2} \left(1 - \frac{\lambda_1}{\lambda} \right)
      + \frac{3}{2N_2} \frac{E_2^2}{(N_2 + 2)^2} \frac{\lambda_2}{x_2^2} \left(1 - \frac{\lambda_2}{\lambda} \right).$$

We shall now assume that $m \geq 3$ and substitute equation (2.5) of \cite{Wan92} for $E$ in the above.
This yields
 $$ - \frac{1}{2N_1} \left( \frac{1}{4\lambda_1} + \frac{1}{2\lambda} \right)    - \frac{1}{2N_2} \left( \frac{1}{4\lambda_2} + \frac{1}{2\lambda} \right)
     - \frac{E_1^2}{(N_1 + 2)^2} \frac{\lambda_1}{x_1^2} - \frac{E_2^2}{(N_2 + 2)^2} \frac{\lambda_2}{x_2^2} $$
 $$ \hspace{1cm}    + \, \frac{3}{2N_1} \frac{E_1^2}{(N_1 + 2)^2} \frac{\lambda_1}{x_1^2} \left(1 - \frac{\lambda_1}{\lambda} \right)
        + \, \frac{3}{2N_2} \frac{E_2^2}{(N_2 + 2)^2} \frac{\lambda_2}{x_2^2} \left(1 - \frac{\lambda_2}{\lambda} \right)$$
$$ = - \frac{1}{2N_1} \left( \frac{1}{4\lambda_1} + \frac{1}{2\lambda} \right)    - \frac{1}{2N_2} \left( \frac{1}{4\lambda_2} + \frac{1}{2\lambda} \right)
       - \,\frac{1}{4N_1} \frac{E_1^2}{(N_1 + 2)^2} \frac{\lambda_1}{x_1^2} \left(4N_1 -6( 1- \frac{\lambda_1}{\lambda}) \right) $$
$$ \hspace{1cm}      - \,\frac{1}{4N_2} \frac{E_2^2}{(N_2 + 2)^2} \frac{\lambda_2}{x_2^2} \left(4N_2 -6( 1- \frac{\lambda_2}{\lambda}) \right). $$
The first two terms in the above are negative, while the last two terms are non-positive since $N_i \geq 2$ for
quaternionic K\"ahler manifolds. Hence the Einstein metric $g$ is unstable.

When $m=2$, one may use equations (2.3) in \cite{Wan92} in an analogous argument to show instability.

To verify the claim about the coindex, we proceed exactly as in the proof of Corollary \ref{TorusBundleInstability}.
We consider $h = \sum_1^{m-1} \, \mu_i \left( \frac{\pi^* \check{g}_i}{4N_i} - \frac{\pi^* \check{g}_{i+1}}{4N_{i+1}}     \right)$
and repeat the above computations. One then obtains
$$ \| \nabla h \|^2 - 2 \langle \mathring{R}h, h \rangle = - \frac{\mu_1^2}{2N_1^2} \left( \frac{1}{4\lambda_1}
       - \frac{1}{2\lambda} \right)  - \sum_{i=2}^{m-1} \frac{(\mu_i - \mu_{i-1})^2}{2N_i^2} \left( \frac{1}{4\lambda_i}
       - \frac{1}{2\lambda} \right)  -  \frac{\mu_{m-1}^2}{2N_m^2} \left( \frac{1}{4\lambda_m} - \frac{1}{2\lambda} \right) $$
$$    - \frac{\mu_1^2}{2N_1} \frac{E_1^2}{(N_1 +2)^2} \frac{\lambda_1}{x_1^2} \left(2N_1 - 3 (1 - \frac{\lambda_1}{\lambda} \right)
     - \sum_{i=2}^{m-1} \frac{(\mu_i - \mu_{i-1})^2}{2N_i} \frac{E_i^2}{(N_i +2)^2} \frac{\lambda_i}{x_i^2} \left(2N_i - 3 (1 - \frac{\lambda_i}{\lambda} \right)  $$
$$   - \,  \frac{\mu_{m-1}^2}{2N_m} \frac{E_m^2}{(N_m +2)^2} \frac{\lambda_m}{x_m^2} \left(2N_m - 3 (1 - \frac{\lambda_m}{\lambda} \right).$$
Since $N_i \geq 2$ the last three terms are non-positive. The remainder of the above expression is strictly negative
unless all the $\mu_i$ vanish.  \hspace{9cm} $\Box$

\begin{rmk}  \label{4d-base}
By definition, quaternionic K\"ahler manifolds have dimension at least $8$. However, self-dual Einstein
$4$-manifolds with nonzero scalar curvature may be interpreted as $4$-dimensional analogs of quaternionic K\"ahler
manifolds. Of course, positive self-dual Einstein $4$-manifolds are allowed base factors in the bundle
construction in \cite{Wan92}. If these are present, the above computation does not yield any information.
But if there are $k$ genuine quaternionic K\"ahler factors in the base, the above analysis does
show that the  coindex of the Einstein metric on the bundle is at least $k-1$. We suspect that the coindex
lower bound remains valid even if some of the base factors are positive self-dual Einstein $4$-manifolds.
(By a celebrated theorem of Hitchin, these can only be $S^4$ and $\C\PP^2$ with their canonical metrics.)
\end{rmk}

\begin{rmk} \label{comparisons}
Note that our proofs of linear instability above use only the Einstein equations and the
Riemannian submersion structure. Unlike the analysis in \cite{Boh05} we do not use the variational theory
of compact homogeneous Einstein manifolds developed in \cite{BWZ04} and \cite{Boh04}. Our simpler approach
here produces the unstable directions directly, and, more importantly,  provides information about the coindex
rather than the sum of the nullity and coindex of the normalized Hilbert functional.

Combining our instability results with  Theorem 1.3 in \cite{Kro15}, one obtains dynamic
instability of the Einstein metric $g$, i.e., the existence of a non-trivial normalised ancient solution of the Ricci flow
whose backwards limit converges {\em modulo diffeomorphisms} to $g$. In \cite{LW17} and \cite{LW16}
$\kappa$-non-collapsed ancient solutions
of the Ricci flow with rescaled backwards limit converging to $g$ were also constructed.  The difference is that
in those works the convergence is obtained without requiring pull-backs by diffeomorphisms. This is possible
because the Ricci flow equations restricted to the class of bundle-type metrics become an ordinary differential system
for which some of the stationary points correspond to the Einstein metrics. On the other hand, the analysis in \cite{LW17}
and \cite{LW16} relies strongly on knowing the approximate location of the stationary points. This is because one needs
to know the eigenvalues and eigenvectors of the linearization of the flow system at the Einstein metrics. For the
torus bundle case and the general case with quaternionic K\"ahler base factors, the Einstein metrics are not
known explicitly enough for constructing non-collapsed ancient solutions with the Einstein metrics as backwards limit.
Because we only use the Einstein equations here and not the actual solutions, we do get ancient solutions for these cases here,
but the backwards convergence to the Einstein metric holds in a much weaker sense.
\end{rmk}


\section{\bf Instability of Einstein metrics on circle bundles}

In this section we examine more closely the stability of an Einstein principal circle bundle.
Let $\pi: (M^{n+1}, g)\rightarrow (B^{n}, \check{g})$ be a principal circle bundle with connection $\theta$, where $g(X, Y)=(\pi^{*}\check{g})(X, Y)+\theta(X)\theta(Y)$ for any pair of vector fields $X, Y$ on $M$. Then $\pi$ is a Riemannian
 submersion with totally geodesic fiber, i.e.,  O'Neill's $T$-tensor vanishes. Let $\Omega=d\theta$ be the curvature form of the connection $\theta$. Then there exists a closed $2$-form $\omega$ on $B$ such  that $\Omega=\pi^{*}\omega$. O'Neill's tensor $A$
 is related to $\Omega$ by
\begin{equation}\label{TensorAForPrincipalBundles}
\theta(A_{X}Y)=-\frac{1}{2}\,\Omega(X, Y).
\end{equation}
Then $g$ is Einstein with Einstein constant $E$ iff
\begin{equation}
\omega \ \  \text{is harmonic},
\end{equation}
\begin{equation}
\|\omega\|^{2}=4E,
\end{equation}
\begin{equation}
\Ric_{\check{g}}(\check{X}, \check{Y})-\frac{1}{2}\sum^{n}_{i=1}\omega(\check{X}, \check{X}_{i})\omega(\check{Y}, \check{X}_{i})
= E \,\check{g}(\check{X}, \check{Y}), \ \ \text{for vector fields} \ \ \check{X}, \check{Y} \ \  \text{on} \ \ B,
\end{equation}
where $\{\check{X}_{1}, \cdots, \check{X}_{n}\}$ is a local orthonormal basis of $B$.

\subsection{Relationship between the stability operators on the total space and on the base space}

Throughout this subsection, let $\{X_{1}, \cdots, X_{n}, U\}$ be a local orthonormal basis on $M$ around some fixed but
 arbitrary point in $M$, where $X_{1}, \cdots, X_{n}$ are basic vector fields whose projections $\{\check{X}_{1}=\pi_{*}X_{1}, \cdots, \check{X}_{n}=\pi_{*}X_{n}\}$ form a local orthonormal basis on $B$, and $U$ is the vertical vector field induced by the circle action on the total space $M$ with $\theta(U)=1$.

\begin{lem}
Let $X$ and $Y$ be basic vector fields. We have
\begin{eqnarray*}
[U, X] &=&\mathcal{L}_{U}X=0,\\
\nabla_{X}Y&=&\check{\nabla}_{\check{X}}\check{Y}-\frac{1}{2}\omega(\check{X}, \check{Y})U,\\
\nabla_{U}X=\nabla_{X}U&=&\frac{1}{2}\omega(\check{X}, \check{X_{i}})X_{i},\\
\nabla_{U}U&=&0.
\end{eqnarray*}
In the second equality, actually $\check{\nabla}_{\check{X}}\check{Y}$ is a vector field on the base $B$. But here we use it to denote its horizontal lift to $P$.
\end{lem}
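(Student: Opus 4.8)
The plan is to derive all four identities from the principal-bundle structure together with O'Neill's submersion formulas, exploiting that $U$ is a unit Killing field whose metric dual is exactly $\theta$. First I would record two structural facts. Since $\theta(U)=1$, $\theta(X_i)=0$, and $g=\pi^{*}\check{g}+\theta\otimes\theta$, we have $g(U,U)=1$ and $g(U,X)=\theta(X)=0$ for every basic $X$; hence $U$ is a unit vertical field and $\theta=g(U,\cdot)$ is its metric dual, $\theta=U^{\flat}$. Moreover the circle action is by isometries, so $U$ is a Killing field and the endomorphism $\nabla U$ is skew-symmetric. The first identity $[U,X]=\mathcal{L}_{U}X=0$ then follows at once, because a basic (projectable, connection-invariant) vector field is invariant under the flow of $U$, which is precisely the circle action.

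For the second identity I would invoke the standard Riemannian-submersion decomposition $\nabla_{X}Y=\mathscr{H}\nabla_{X}Y+\mathscr{V}\nabla_{X}Y$ for basic $X,Y$: the horizontal part is the horizontal lift of $\check{\nabla}_{\check{X}}\check{Y}$ (this is the identification flagged in the statement), and the vertical part is by definition O'Neill's $A_{X}Y=\mathscr{V}\nabla_{X}Y$. Since $A_{X}Y$ is vertical and $U$ is the unit vertical field, $A_{X}Y=\theta(A_{X}Y)\,U=-\tfrac12\Omega(X,Y)\,U=-\tfrac12\omega(\check{X},\check{Y})\,U$ using $(\ref{TensorAForPrincipalBundles})$ and $\Omega=\pi^{*}\omega$. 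This gives the asserted formula $\nabla_{X}Y=\check{\nabla}_{\check{X}}\check{Y}-\tfrac12\omega(\check{X},\check{Y})U$.

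For the third identity, torsion-freeness together with $[U,X]=0$ gives $\nabla_{U}X=\nabla_{X}U$ immediately, so it suffices to compute $\nabla_{X}U$. Its vertical component vanishes, since $g(\nabla_{X}U,U)=\tfrac12 X(g(U,U))=0$. For the horizontal component I use that $U$ is Killing with dual $\theta$: for any basic $Y$, skew-symmetry of $\nabla U$ yields $\Omega(X,Y)=d\theta(X,Y)=g(\nabla_{X}U,Y)-g(\nabla_{Y}U,X)=2\,g(\nabla_{X}U,Y)$, whence $g(\nabla_{X}U,X_{i})=\tfrac12\omega(\check{X},\check{X}_{i})$ and $\nabla_{X}U=\tfrac12\omega(\check{X},\check{X}_{i})X_{i}$. (Equivalently one can run O'Neill's skew-symmetry relation $g(A_{X}U,Y)=-g(A_{X}Y,U)$, which gives the same value and the same sign.) Finally, $\nabla_{U}U=0$ follows from $U$ being a Killing field of constant length, or by the direct check $g(\nabla_{U}U,U)=\tfrac12 U(g(U,U))=0$ and $g(\nabla_{U}U,X)=-g(U,\nabla_{U}X)=-g(U,\nabla_{X}U)=-\tfrac12 X(g(U,U))=0$.

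I do not anticipate a serious obstacle: once $\theta=U^{\flat}$ and the Killing property of $U$ are in place, each computation is routine. The only points demanding care are the bookkeeping of the factor $\tfrac12$ and the sign arising from the paper's convention $\theta(A_{X}Y)=-\tfrac12\Omega(X,Y)$ in $(\ref{TensorAForPrincipalBundles})$, and the consistent identification of $\check{\nabla}_{\check{X}}\check{Y}$ with its horizontal lift.
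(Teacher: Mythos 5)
Your proof is correct and takes essentially the same route as the paper's, which simply cites the $S^{1}$-invariance of the horizontal distribution for the first identity and then O'Neill's fundamental equations, the vanishing of $T$, and the relation $A_{X}Y=-\frac{1}{2}\omega(\check{X},\check{Y})U$ for the rest. Your packaging of the third and fourth identities via the observation that $U$ is a unit Killing field with $\theta=U^{\flat}$ is an equivalent reformulation of the O'Neill skew-symmetry $g(A_{X}U,Y)=-g(A_{X}Y,U)$, and your signs and factors of $\frac{1}{2}$ all check out.
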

\begin{proof}
The first equation follows from facts that $X$ is horizontal, $U$ is generated by the $S^{1}$-action, and the horizontal distribution is $S^{1}$-invariant. The rest of the equations follow from O'Neill's fundamental equations for Riemannian submersions, the vanishing of the tensor $T$, and the relation $A_{X}Y=-\frac{1}{2}\omega({\check{X}, \check{Y}})U$.
\end{proof}

Let $\check{h}\in C^{\infty}(S^{2}(B^{n}))$ be a symmetric 2-tensor on $B^{n}$,  then $h=\pi^{*}\check{h}$ is a symmetric 2-tensor on $M^{n+1}$.

\begin{lem}\label{LaplacianOnCircleBundles}
For any $1\leq i, j\leq n$
\begin{equation}
(\nabla^{*}\nabla h)_{ij}
=(\check{\nabla}^{*}\check{\nabla}\check{h})_{ij}\circ\pi+\sum^{n}_{k,l=1} \, \left(\frac{1}{2}\omega_{ki}\omega_{kl}\check{h}_{lj}+\frac{1}{2}\omega_{kj}\omega_{kl}\check{h}_{li}
-\frac{1}{2}\omega_{ik}\omega_{jl}\check{h}_{kl} \right)\circ\pi.
\end{equation}
\end{lem}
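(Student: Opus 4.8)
The plan is to compute $\nabla^{*}\nabla h=-\sum_{a}\nabla^{2}_{e_{a},e_{a}}h$ directly in the frame $\{X_{1},\dots,X_{n},U\}$, using the covariant derivative formulas of the preceding lemma, and then to read off the base rough Laplacian together with the $\omega$-corrections. The starting observation is that $h=\pi^{*}\check{h}$ is purely horizontal: $h(U,\cdot)=0$, while $h(X_{i},X_{j})=\check{h}_{ij}\circ\pi$ is constant along the fibers. First I would record the first covariant derivative $\nabla h$ on all pairs of frame vectors. Because $\nabla_{X_{k}}U=\frac{1}{2}\omega_{kl}X_{l}$ and $\nabla_{X_{k}}X_{i}$ has vertical part $-\frac{1}{2}\omega_{ki}U$, the connection mixes horizontal and vertical directions, so although $h$ itself has no mixed components, $\nabla h$ does; a short computation gives $(\nabla_{X_{k}}h)(X_{i},X_{j})=(\check{\nabla}_{\check{X}_{k}}\check{h})_{ij}\circ\pi$, $(\nabla_{X_{k}}h)(U,X_{j})=-\frac{1}{2}\omega_{kl}\check{h}_{lj}\circ\pi$, and $(\nabla_{U}h)(X_{i},X_{j})=-\frac{1}{2}(\omega_{il}\check{h}_{lj}+\omega_{jl}\check{h}_{il})\circ\pi$, with all other components vanishing.

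To assemble the second derivatives I would fix the arbitrary point $p$ and choose the base orthonormal frame to be $\check{\nabla}$-normal at $\pi(p)$, so that $\check{\nabla}_{\check{X}_{k}}\check{X}_{l}$ vanishes at $\pi(p)$; then $\nabla_{X_{k}}X_{l}=-\frac{1}{2}\omega_{kl}U$ at $p$ and $\nabla_{U}U=0$ identically, which kills all the $\nabla_{\nabla_{e_{a}}e_{a}}h$ terms at $p$. The horizontal sum $-\sum_{k}(\nabla^{2}_{X_{k},X_{k}}h)(X_{i},X_{j})$ then splits into two pieces: the pure-horizontal piece reproduces $(\check{\nabla}^{*}\check{\nabla}\check{h})_{ij}$ at $p$ (this is exactly the base rough Laplacian expressed in a normal frame), while $\nabla_{X_{k}}X_{i}=-\frac{1}{2}\omega_{ki}U$ feeding into the mixed component $(\nabla_{X_{k}}h)(U,X_{j})$ produces the quadratic terms $\frac{1}{4}\omega_{ki}\omega_{kl}\check{h}_{lj}+\frac{1}{4}\omega_{kj}\omega_{kl}\check{h}_{li}$. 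Likewise the vertical term $-(\nabla^{2}_{U,U}h)(X_{i},X_{j})$, computed via $\nabla_{U}X_{i}=\frac{1}{2}\omega_{il}X_{l}$, contributes a further batch of quadratic $\omega$-terms.

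Finally I would collect the corrections and simplify using the antisymmetry $\omega_{ij}=-\omega_{ji}$ together with the symmetry $\check{h}_{ij}=\check{h}_{ji}$. The contributions of the form $\omega\omega\check{h}$ carrying a repeated outer index combine into $\frac{1}{2}\omega_{ki}\omega_{kl}\check{h}_{lj}+\frac{1}{2}\omega_{kj}\omega_{kl}\check{h}_{li}$, while the cross terms combine into $-\frac{1}{2}\omega_{ik}\omega_{jl}\check{h}_{kl}$, which is precisely the asserted formula; since $p$ is arbitrary, this holds everywhere on $M$. I expect the main obstacle to be purely bookkeeping: correctly isolating the nonzero mixed components of $\nabla h$ and then tracking signs through the antisymmetry of $\omega$ when merging the horizontal and vertical curvature corrections. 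It is easy to miscount a factor of $\frac{1}{2}$ or drop a sign, but no conceptual difficulty remains once the normal-frame reduction eliminates the Christoffel-type $\nabla_{\nabla_{e_{a}}e_{a}}h$ terms and identifies the horizontal part with $\check{\nabla}^{*}\check{\nabla}\check{h}$.
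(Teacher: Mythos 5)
Your proposal is correct and follows essentially the same route as the paper: a direct expansion of $-\sum_{a}\nabla^{2}_{e_{a},e_{a}}h$ in the adapted frame using the connection formulas, with the quadratic $\omega$-corrections arising exactly from the mixed components of $\nabla h$ in the horizontal sum and from $\nabla_{U}\nabla_{U}h$. The only cosmetic difference is that you kill the $\nabla_{\nabla_{X_k}X_k}h$ terms by choosing a $\check{\nabla}$-normal frame at the base point, whereas the paper retains them and identifies them with $(\check{\nabla}_{\check{\nabla}_{k}\check{X}_{k}}\check{h})_{ij}\circ\pi$; your intermediate components of $\nabla h$ and the final bookkeeping with the antisymmetry of $\omega$ all check out.
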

\begin{proof}
\begin{equation}\label{LaplaceTerm}
(\nabla^{*}\nabla h)_{ij}=-\sum^{n}_{k=1}(\nabla_{k}\nabla_{k}h)(X_{i}, X_{j})+\sum^{n}_{k=1}(\nabla_{\nabla_{k}X_{k}}h)(X_{i}, X_{j})-(\nabla_{U}\nabla_{U}h)(X_{i}, X_{j}),
\end{equation}
where and throughout this proof, $\nabla_{k}$ means $\nabla_{X_{k}}$, $\check{\nabla}_{k}$ means $\check{\nabla}_{\check{X_{k}}}$. Now we compute each of these three terms.
\begin{equation}\label{FirstTerm}
\begin{aligned}
(\nabla_{k}\nabla_{k}h)(X_{i}, X_{j})
&=X_{k}X_{k}(h(X_{i}, X_{j}))-X_{k}(h(\nabla_{k}X_{i}, X_{j}))-X_{k}(h(X_{i}, \nabla_{k}X_{j}))\\
&\ \ \ -X_{k}(h(\nabla_{k}X_{i}, X_{j}))+h(\nabla_{k}\nabla_{k}X_{i}, X_{j})+h(\nabla_{k}X_{i}, \nabla_{k}X_{j})\\
&\ \ \ -X_{k}(h(X_{i}, \nabla_{k}X_{j}))+h(\nabla_{k}X_{i}, \nabla_{k}X_{j})+h(X_{i}, \nabla_{k}\nabla_{k}X_{j})\\
&=[\check{X_{k}}\check{X_{k}}(\check{h}(\check{X_{i}}, \check{X_{j}}))-\check{X_{k}}(\check{h}(\check{\nabla}_{k}\check{X_{i}}, \check{X_{j}}))-\check{X_{k}}(\check{h}(\check{X_{i}}, \check{\nabla}_{k}\check{X_{j}}))\\
&\ \ \ -\check{X_{k}}(\check{h}(\check{\nabla}_{k}\check{X_{i}}, \check{X_{j}}))+\check{h}(\check{\nabla}_{k}\check{\nabla}_{k}\check{X_{i}}, \check{X_{j}})\\
&\ \ \ -\sum^{n}_{l=1}\frac{1}{4}\omega_{ki}\omega_{kl}\check{h}_{lj}+\check{h}(\check{\nabla}_{k}\check{X_{i}}, \check{\nabla}_{k}\check{X_{j}})\\
&\ \ \  -\check{X_{k}}(\check{h}(\check{X_{i}}, \check{\nabla}_{k}\check{X_{j}}))+\check{h}(\check{\nabla}_{k}\check{X_{i}}, \check{\nabla}_{k}\check{X_{j}})\\
&\ \ \ +\check{h}(\check{X_{i}}, \check{\nabla}_{k}\check{\nabla}_{k}\check{X_{j}})-\sum^{n}_{l=1}\frac{1}{4}\omega_{kj}\omega_{kl}\check{h}_{il}]\circ\pi\\
&=[(\check{\nabla}_{k}\check{\nabla}_{k}\check{h})_{ij}-\sum^{n}_{l=1}\frac{1}{4}\omega_{ki}\omega_{kl}\check{h}_{lj}
-\sum^{n}_{l=1}\frac{1}{4}\omega_{kj}\omega_{kl}\check{h}_{il}]\circ\pi.
\end{aligned}
\end{equation}
In the second equality, we have used the relation $\pi_{*}(\nabla_{k}\nabla_{k}X_{i})=\check{\nabla}_{k}\check{\nabla}_{k}\check{X_{i}}-\sum^{n}_{l=1}\frac{1}{4}\omega_{ki}\omega_{kl}\check{X}_{l}$.

Then, because $\nabla_{k}X_{k}=\check{\nabla}_{k}\check{X_{k}}-\frac{1}{2}\omega_{kk}U=\check{\nabla}_{k}\check{X_{k}}$, we have
\begin{equation}\label{SecondTerm}
(\nabla_{\nabla_{k}X_{k}}h)(X_{i}, X_{j})=(\check{\nabla}_{\check{\nabla}_{k}X_{k}}\check{h})_{ij}\circ\pi.
\end{equation}

For the third term, we have
\begin{equation}\label{ThirdTerm}
\begin{aligned}
(\nabla_{U}\nabla_{U}h)_{ij}
&=UU(h_{ij})-2U(h(\nabla_{U}X_{i}, X_{j}))-2U(h(X_{i}, \nabla_{U}X_{j}))\\
&\ \ \ \ +h(\nabla_{U}\nabla_{U}X_{i}, X_{j})+2h(\nabla_{U}X_{i}, \nabla_{U}X_{j})+h(X_{i}, \nabla_{U}\nabla_{U}X_{j})\\
&=\sum^{n}_{l=1}\left[\frac{1}{4}\omega_{ik}\omega_{kl}\check{h}_{lj}+\frac{1}{2}\omega_{ik}\omega_{jl}\check{h}_{kl}+ \frac{1}{4}\omega_{jk}\omega_{kl}\check{h}_{il}\right] \circ\pi.
\end{aligned}
\end{equation}
Here, we used the facts that $h_{ij}$, $h(\nabla_{U}X_{i}, X_{j})$, and $h(X_{i}, \nabla_{U}X_{j})$ are constant along fibers, since $h$ is the pull-back of a 2-tensor on the base. We also used $\pi_{*}(\nabla_{U}\nabla_{U}X_{i})=\sum^{n}_{l=1}\frac{1}{4}\omega_{ik}\omega_{kl}\check{X_{l}}$.

Substituting $(\ref{FirstTerm})$, $(\ref{SecondTerm})$, and $(\ref{ThirdTerm})$ into $(\ref{LaplaceTerm})$, we complete the proof of the lemma.
\end{proof}

By using the fundamental equations for the Riemannian curvature tensor in Riemannian submersions (see Theorem 2 in \cite{ONe66} or equation (9.28f) in \cite{Bes87}) and the fact $A_{X}Y=-\frac{1}{2}\omega(\check{X}, \check{Y})U$ for basic vector fields $X$ and $Y$, we have the following relation between the Riemannian curvature tensor on the total space and that on the base.
\begin{lem}\label{CurvatureOnCircleBundles}
\begin{equation}
R_{ijkl}=\check{R}_{ijkl}\circ\pi+ \left(-\frac{1}{2}\omega_{ij}\omega_{kl}+\frac{1}{4}\omega_{jk}\omega_{il}-\frac{1}{4}\omega_{ik}\omega_{jl}\right)\circ\pi,
\end{equation}
and therefore,
\begin{equation}
(\mathring{R}h)_{ij}=(\mathring{\check{R}}\check{h})_{ij}\circ\pi+ \left(-\frac{1}{2}\sum^{n}_{k,l=1}\omega_{ik}\omega_{jl}\check{h}_{kl}
+\frac{1}{4}\sum^{n}_{k,l=1}\omega_{kj}\omega_{il}\check{h}_{kl}\right)\circ\pi,
\end{equation}
where $i$, $j$, $k$, and $l$ run through $1$ to $n$.
\end{lem}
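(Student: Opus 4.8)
The plan is to specialize O'Neill's formula for the horizontal part of the curvature tensor of a Riemannian submersion, and then to contract the resulting expression against $h=\pi^{*}\check{h}$, where the antisymmetry of $\omega$ produces the key cancellation.

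First I would record the consequence of the relation $A_{X}Y=-\frac{1}{2}\omega(\check{X},\check{Y})\,U$: for basic vector fields one has
\[
g(A_{X_{i}}X_{j},A_{X_{k}}X_{l})=\tfrac{1}{4}\,\omega_{ij}\,\omega_{kl},
\]
since $g(U,U)=\theta(U)^{2}=1$. Substituting this into equation (9.28f) of \cite{Bes87} (equivalently Theorem 2 of \cite{ONe66}), which, with the curvature convention $R_{X,Y}=\nabla_{[X,Y]}-[\nabla_{X},\nabla_{Y}]$ used here, expresses the totally horizontal component as
\[
R_{ijkl}=\check{R}_{ijkl}-2\,g(A_{X_{i}}X_{j},A_{X_{k}}X_{l})+g(A_{X_{j}}X_{k},A_{X_{i}}X_{l})-g(A_{X_{i}}X_{k},A_{X_{j}}X_{l}),
\]
gives the first displayed identity of the lemma immediately. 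This step is purely a substitution; the only care required is matching O'Neill's sign conventions to the convention in force.

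For the second identity I would use the definition $(\mathring{R}h)_{ij}=R_{ikjl}h_{kl}$, summed over the full local frame $\{X_{1},\dots,X_{n},U\}$. Since $h=\pi^{*}\check{h}$ vanishes whenever a vertical vector is inserted, $h_{kl}=\check{h}_{kl}\circ\pi$ for $1\le k,l\le n$ and $h_{kl}=0$ otherwise; the contraction therefore reduces to a sum over horizontal indices, so only the totally horizontal components $R_{ikjl}$ computed above are needed. Re-indexing the first identity to $R_{ikjl}$ and multiplying by $\check{h}_{kl}$, the $\check{R}$-part assembles into $(\mathring{\check{R}}\check{h})_{ij}\circ\pi$, while the $A$-tensor part gives three terms $-\frac{1}{2}\omega_{ik}\omega_{jl}$, $+\frac{1}{4}\omega_{kj}\omega_{il}$, and $-\frac{1}{4}\omega_{ij}\omega_{kl}$ contracted against $\check{h}_{kl}$. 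The last of these is $-\frac{1}{4}\omega_{ij}\sum_{k,l}\omega_{kl}\check{h}_{kl}$, which vanishes because it contracts the antisymmetric $\omega$ against the symmetric $\check{h}$; the two surviving terms are exactly those in the statement.

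The computation is routine, so the only points that need attention are the correct transcription of O'Neill's formula in the present sign convention, and the observation that $\sum_{k,l}\omega_{kl}\check{h}_{kl}=0$ removes the third correction term. As a consistency check one can contract the resulting formula with $\check{h}=\check{g}$ and recover $\langle\mathring{R}\pi^{*}\check{g},\pi^{*}\check{g}\rangle=\check{s}-3\|A\|^{2}$ in agreement with Lemma \ref{CurvatureTerm}, using $\|\omega\|^{2}=4\|A\|^{2}$.
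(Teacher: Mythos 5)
Your proposal is correct and follows exactly the route the paper indicates: substitute $A_{X_i}X_j=-\tfrac{1}{2}\omega_{ij}U$ (so $g(A_{X_i}X_j,A_{X_k}X_l)=\tfrac14\omega_{ij}\omega_{kl}$) into O'Neill's formula (9.28f) of \cite{Bes87}, then contract against $h=\pi^*\check{h}$, noting that only horizontal indices contribute and that the term $\omega_{ij}\sum_{k,l}\omega_{kl}\check{h}_{kl}$ vanishes by antisymmetry against symmetry. The consistency check against Lemma \ref{CurvatureTerm} is a nice confirmation that the sign conventions have been matched correctly.
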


\begin{prop}\label{StabilityOperatorRelation}
\begin{equation}
\langle \nabla^{*}\nabla h-2\mathring{R}h, h\rangle
=\langle \check{\nabla}^{*}\check{\nabla}\check{h}-2\mathring{\check{R}}\check{h}, \check{h}\rangle\circ\pi +\sum^{n}_{i, j, k, l=1}(\omega_{ki}\omega_{kl}\check{h}_{lj}\check{h}_{ij}+\omega_{ik}\omega_{jl}\check{h}_{kl}\check{h}_{ij})\circ\pi,
\end{equation}
\end{prop}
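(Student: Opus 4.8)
The plan is to feed the pointwise identities of Lemmas~\ref{LaplacianOnCircleBundles} and \ref{CurvatureOnCircleBundles} into the pairing $\langle\,\cdot\,,h\rangle$ and contract everything against $h_{ij}=\check{h}_{ij}\circ\pi$. The first observation is that, since $h=\pi^{*}\check{h}$ is horizontal, its only nonzero components in the frame $\{X_{1},\dots,X_{n},U\}$ are $h_{ij}=\check{h}_{ij}\circ\pi$ for $1\le i,j\le n$; all components involving $U$ vanish. Hence for any symmetric $2$-tensor $T$ on $M$ one has $\langle T,h\rangle=\sum_{i,j=1}^{n}T_{ij}\,\check{h}_{ij}\circ\pi$, so it suffices to contract the $ij$-components supplied by the two lemmas with $\check{h}_{ij}$ and sum over $1\le i,j\le n$.

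Carrying this out for $\nabla^{*}\nabla h$ via Lemma~\ref{LaplacianOnCircleBundles}, I would first note that the two correction terms $\tfrac12\omega_{ki}\omega_{kl}\check{h}_{lj}$ and $\tfrac12\omega_{kj}\omega_{kl}\check{h}_{li}$ become equal after contraction with $\check{h}_{ij}$: relabeling $i\leftrightarrow j$ and using $\check{h}_{ij}=\check{h}_{ji}$ identifies them. Thus $\langle\nabla^{*}\nabla h,h\rangle=\langle\check{\nabla}^{*}\check{\nabla}\check{h},\check{h}\rangle\circ\pi+(S-\tfrac12 P)\circ\pi$, where I abbreviate $S=\sum\omega_{ki}\omega_{kl}\check{h}_{lj}\check{h}_{ij}$ and $P=\sum\omega_{ik}\omega_{jl}\check{h}_{kl}\check{h}_{ij}$ (all indices running from $1$ to $n$). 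Similarly, Lemma~\ref{CurvatureOnCircleBundles} gives $\langle\mathring{R}h,h\rangle=\langle\mathring{\check{R}}\check{h},\check{h}\rangle\circ\pi+(-\tfrac12 P+\tfrac14 Q)\circ\pi$ with $Q=\sum\omega_{kj}\omega_{il}\check{h}_{kl}\check{h}_{ij}$.

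The one genuinely nontrivial identity, and the step I expect to be the main (if modest) obstacle since it is easy to misplace a sign, is $Q=-P$. I would prove it by index manipulation: using the antisymmetry $\omega_{kj}=-\omega_{jk}$ and $\omega_{il}=-\omega_{li}$ I rewrite $Q=\sum\omega_{jk}\omega_{li}\check{h}_{kl}\check{h}_{ij}$, then swap the dummy pair $i\leftrightarrow j$ and use $\check{h}_{ji}=\check{h}_{ij}$ together with $\omega_{lj}=-\omega_{jl}$ to arrive at $Q=-\sum\omega_{ik}\omega_{jl}\check{h}_{kl}\check{h}_{ij}=-P$. Equivalently, writing $W=(\omega_{ij})$ for the skew matrix and $H=(\check{h}_{ij})$ for the symmetric one, both $P$ and $Q$ equal $\mp\tr(WHWH)$, which makes the sign transparent.

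Finally I assemble the pieces. With $Q=-P$ the curvature correction becomes $-\tfrac12 P-\tfrac14 P=-\tfrac34 P$, so
$$\langle\nabla^{*}\nabla h-2\mathring{R}h,h\rangle=\langle\check{\nabla}^{*}\check{\nabla}\check{h}-2\mathring{\check{R}}\check{h},\check{h}\rangle\circ\pi+\left(S-\tfrac12 P+\tfrac32 P\right)\circ\pi=\langle\check{\nabla}^{*}\check{\nabla}\check{h}-2\mathring{\check{R}}\check{h},\check{h}\rangle\circ\pi+(S+P)\circ\pi,$$
and $S+P=\sum_{i,j,k,l=1}^{n}(\omega_{ki}\omega_{kl}\check{h}_{lj}\check{h}_{ij}+\omega_{ik}\omega_{jl}\check{h}_{kl}\check{h}_{ij})$ is exactly the claimed correction, completing the proof.
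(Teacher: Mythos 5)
Your proposal is correct and follows essentially the same route as the paper: contract the pointwise formulas of Lemmas~\ref{LaplacianOnCircleBundles} and \ref{CurvatureOnCircleBundles} against $\check{h}_{ij}$ (only the horizontal components of $h$ contribute) and simplify using the symmetry of $\check{h}$ and the antisymmetry of $\omega$, the key cancellation being exactly the identity $\sum\omega_{kj}\omega_{il}\check{h}_{kl}\check{h}_{ij}=-\sum\omega_{ik}\omega_{jl}\check{h}_{kl}\check{h}_{ij}$ that the paper invokes in its final step. Your index manipulation (and the $\tr(WHWH)$ reformulation) verifying that identity is sound.
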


\begin{proof}
By using Lemma \ref{LaplacianOnCircleBundles} and Lemma \ref{CurvatureOnCircleBundles}, we have
\begin{align*}
\langle\nabla^{*}\nabla h-2\mathring{R}h, h\rangle
&=\sum^{n}_{i,j=1}(\nabla^{*}\nabla h-2\mathring{R}h)_{ij}h_{ij}\\
&=\left(\sum^{n}_{i,j=1}(\check{\nabla}^{*}\check{\nabla}\check{h}-2\mathring{\check{R}}\check{h})_{ij}\check{h}_{ij}\right)\circ\pi
   +\sum^{n}_{i,j,k,l=1}[\frac{1}{2}\omega_{ki}\omega_{kl}
  \check{h}_{lj}\check{h}_{ij}+\frac{1}{2}\omega_{kj}\omega_{kl}\check{h}_{li}\check{h}_{ij}\\
&\ \ \ -\frac{1}{2}\omega_{ik}\omega_{jl}\check{h}_{kl}\check{h}_{ij}
  +\omega_{ik}\omega_{jl}\check{h}_{kl}\check{h}_{ij}-\frac{1}{2}\omega_{kj}\omega_{il}\check{h}_{kl}\check{h}_{ij}]\circ\pi\\
&=\langle \check{\nabla}^{*}\check{\nabla}\check{h}-2\mathring{\check{R}}\check{h}, \check{h}\rangle\circ\pi +\sum^{n}_{i,j,k,l=1}(\omega_{ki}\omega_{kl}\check{h}_{lj}\check{h}_{ij}+\omega_{ik}\omega_{jl}\check{h}_{kl}\check{h}_{ij})\circ\pi.
\end{align*}
In the last step above, we have exploited the anti-symmetry (resp. symmetry) of the indices $i, j$ in $\omega_{ij}$
 (resp. $\check{h}_{ij}$).
\end{proof}

\subsection{A Weitzenb\"{o}ck formula and proof of Theorem \ref{CircleBundleInstability}}

In the situation of an Einstein principal circle bundle that we are considering, if the base metric
is not Einstein, a natural symmetric $2$-tensor that presents itself is
\begin{equation}
\check{h}(\check{X}, \check{Y}) :=\sum^{n}_{i=1}\omega(\check{X}, \check{X}_{i})\,\omega(\check{Y}, \check{X}_{i})
      =2 \Ric_{\check{g}}(\check{X}, \check{Y})-2E\check{g}(\check{X}, \check{Y}).
\end{equation}
Then $\check{h}-\frac{\|\omega\|^{2}}{n}\check{g}$ is a nonzero TT-tensor on $B$, since
$\check{s}=nE-\frac{1}{2}\|\omega\|^{2}$ is constant, and so $\delta_{\check{g}}\check{h}=\delta_{\check{g}}(2 \Ric_{\check{g}})=-d\check{s}=0$.
Its pull-back $\pi^{*}\check{h}-\frac{\|\omega\|^{2}}{n}\pi^{*}\check{g}$ is a TT-tensor on $M$.

To prove Theorem \ref{CircleBundleInstability}, we will view the symmetric $2$-tensor $\check{h}$ as a $1$-form on $B$
with values in the cotangent bundle $T^{*}B$, i.e., $\check{h}\in C^{\infty}(B, \wedge^{1}T^{*}B\otimes T^{*}B)$. Using
the Levi-Civita connection of $\check{g}$ on $T^*B$, we obtain an exterior differential $d^{\check{{\nabla}}}$ for
$T^{*}B$-valued differential forms. Let $\delta^{\check{\nabla}}$ denote its formal adjoint operator. In particular,
one has
\begin{equation*}
d^{\check{\nabla}}: C^{\infty}(B, \wedge^{1}T^{*}B\otimes T^{*}B) \rightarrow C^{\infty}(B, \wedge^{2}T^{*}B\otimes T^{*}B),
\end{equation*}
and
\begin{equation*}
\delta^{\check{\nabla}}: C^{\infty}(B, \wedge^{2}T^{*}B\otimes T^{*}B) \rightarrow C^{\infty}(B, \wedge^{1}T^{*}B\otimes T^{*}B).
\end{equation*}
Let $\{\check{X}_{1}, \cdots, \check{X}_{n}\}$ be a local orthonormal basis around a point $b\in B$ such that $(\check{\nabla}_{\check{X}_{i}}\check{X}_{j})(b)=0$ for all $1\leq i, j\leq n$. Then at the point $b$, we have
\begin{equation}
\begin{aligned}
(d^{\check{\nabla}}\check{h})(\check{X}_{i}, \check{X}_{j})(\check{X}_{k})
&=[(\check{\nabla}_{\check{X}_{i}}\check{h})(\check{X}_{j})-(\check{\nabla}_{\check{X}_{j}}\check{h})(\check{X}_{i})](\check{X}_{k})\\
&=[\check{\nabla}_{\check{X}_{i}}(\check{h}(\check{X}_{j}))-\check{\nabla}_{\check{X}_{j}}(\check{h}(\check{X}_{i}))](\check{X}_{k})\\
&=\check{\nabla}_{\check{X}_{i}}(\check{h}(\check{X}_{j}, \check{X}_{k}))-\check{\nabla}_{\check{X}_{j}}(\check{h}(\check{X}_{i}, \check{X}_{k}))\\
&=(\check{\nabla}_{\check{X}_{i}}\check{h})(\check{X}_{j}, \check{X}_{k})-(\check{\nabla}_{\check{X}_{j}}\check{h})(\check{X}_{i}, \check{X}_{k}),
\end{aligned}
\end{equation}
and
\begin{equation}
\begin{aligned}
(\delta^{\check{\nabla}}\check{h})(\check{X}_{j})
&=-\sum^{n}_{i=1}(\check{\nabla}_{\check{X}_{i}}\check{h})(\check{X}_{i})(\check{X}_{j})\\
&=-\sum^{n}_{i=1}\check{\nabla}_{\check{X}_{i}}(\check{h}(\check{X}_{i}))(\check{X}_{j})\\
&=-\sum^{n}_{i=1}\check{\nabla}_{\check{X}_{i}}(\check{h}(\check{X}_{i})(\check{X}_{j}))\\
&=-\sum^{n}_{i=1}\check{\nabla}_{\check{X}_{i}}(\check{h}(\check{X}_{i}, \check{X}_{j}))\\
&=(\delta_{\check{g}}\check{h})(\check{X}_{j}).
\end{aligned}
\end{equation}
The above equalities relate the differential and codifferential on $\check{h}$ regarded as a $T^*B$-valued form
with corresponding operators acting on the symmetric $2$-tensor $\check{h}$. In particular, the condition
$d^{\check{\nabla}} \check{h} = 0$ is equivalent to $\check{h}$ being a Codazzi tensor, and $\check{h}$ is
also divergence free as a $T^*B$-valued form. It is therefore natural to apply the Weitzenb\"{o}ck formula

\begin{equation} \label{Weitzenbock}
(\delta^{\check{\nabla}}d^{\check{\nabla}}+d^{\check{\nabla}}\delta^{\check{\nabla}})\,\check{h}
=\check{\nabla}^{*}\check{\nabla}\check{h}-\mathring{R}^{\check{g}}\check{h}+ \Ric_{\check{g}}\circ \check{h},
\end{equation}
where $(\mathring{\check{R}} \check{h})_{ij}:=\sum\limits^{n}_{k, l=1}\check{R}_{ikjl}\check{h}_{kl}$ and $(\check{\Ric}\circ\check{h})_{ij}:=\sum\limits^{n}_{k=1}\check{\Ric}_{ik}\check{h}_{kj}$.

{\noindent \em Proof of Theorem $\ref{CircleBundleInstability}$}: By the Einstein conditions, one immediately obtains
\begin{equation}  \label{baseEinstein}
\check{\Ric}=\frac{1}{2}\,\check{h}+\frac{\|\omega\|^2}{4}\,\check{g}
\end{equation}
and
\begin{equation}
\check{s}=\frac{1}{2}\, \|\omega\|^{2}+\frac{n}{4}\,\|\omega\|^{2}.
\end{equation}

It then follows from Proposition $\ref{StabilityOperatorRelation}$, the Weitzenb\"{o}ck formula
($\ref{Weitzenbock}$), the assumption $\delta^{\check{\nabla}}\check{h}=\delta_{\check{g}}\check{h}=0$,
and the fact $\tr_{\check{g}}\check{h}=\|\omega\|^{2}$ that we have

\begin{eqnarray*}
\lefteqn{ \langle (\nabla^{*}\nabla-2\mathring{R})(\pi^{*}(\check{h}-\frac{\|\omega\|^{2}}{n}\check{g})), \pi^{*}(\check{h}-\frac{\|\omega\|^{2}}{n}\check{g})\rangle } \\
&=& \{\langle (\check{\nabla}^{*}\check{\nabla}-2\mathring{\check{R}})(\check{h}-\frac{\|\omega\|^{2}}{n}\check{g}), (\check{h}-\frac{\|\omega\|^{2}}{n}\check{g})\rangle
 +\omega_{ki}\,\omega_{kl}(\check{h}-\frac{\|\omega\|^{2}}{n}\check{g})_{lj}(\check{h}-\frac{\|\omega\|^{2}}{n}\check{g})_{ij} \\
&   & \,\,\, + \omega_{ik}\,\omega_{jl}(\check{h}-\frac{\|\omega\|^{2}}{n}\check{g})_{kl}(\check{h}-\frac{\|\omega\|^{2}}{n}\check{g})_{ij}
\ \} \circ \pi  \\
&=&  \{\langle \check{\nabla}^{*}\check{\nabla}\check{h}, \check{h}\rangle - 2\langle \mathring{\check{R}}\check{h}, \check{h} \rangle + 4\, \frac{\|\omega\|^{2}}{n}\langle \check{\Ric}, \check{h} \rangle - 2\,\frac{\|\omega\|^{4}}{n^{2}}\check{s}+2\, \tr_{\check{g}}(\check{h}\circ \check{h}\circ \check{h})\\
&    &\ \ \ \ - \frac{4\|\omega\|^{2} \|\check{h}\|^{2}}{n}+\frac{2\|\omega\|^{6}}{n^{2}}\}\circ \pi\\
&=& \{2\langle \delta^{\check{\nabla}}d^{\check{\nabla}}\check{h}, \check{h} \rangle - \langle \check{\nabla}^{*}\check{\nabla} \check{h}, \check{h} \rangle - 2\langle \check{\Ric}\circ\check{h}, \check{h} \rangle + 4\,\frac{\|\omega\|^{2}}{n}\langle \frac{1}{2}\check{h}+\frac{\|\omega\|^{2}}{4}\check{g}, \check{h} \rangle \\
&   &\ \ \ \ -\frac{2\|\omega\|^{4}}{n^{2}}(\frac{1}{2}\|\omega\|^{2}+\frac{n}{4}\|\omega\|^{2}) + 2\tr_{\check{g}}(\check{h}\circ \check{h}\circ \check{h}) - \frac{4\|\omega\|^{2} \|\check{h}\|^{2}}{n}+\frac{2\|\omega\|^{6}}{n^{2}}\} \circ \pi\\
&=  & \{2\langle \delta^{\check{\nabla}}d^{\check{\nabla}}\check{h}, \check{h} \rangle - \langle \check{\nabla}^{*}\check{\nabla} \check{h}, \check{h} \rangle - \tr_{\check{g}}(\check{h}\circ \check{h}\circ \check{h}) - \frac{\|\omega\|^{2}\|\check{h}\|^{2}}{2} + \frac{2\|\omega\|^{2}\|\check{h}\|^{2}}{n} + \frac{\|\omega\|^{6}}{n}\\
&    &\ \ \ \ - \frac{\|\omega\|^{6}}{n^{2}} - \frac{\|\omega\|^{6}}{2n} + 2\tr_{\check{g}}(\check{h}\circ \check{h}\circ \check{h}) - \frac{4\|\omega\|^{2} \|\check{h}\|^{2}}{n}+\frac{2\|\omega\|^{6}}{n^{2}}\}\circ \pi\\
&=&  \{2\langle \delta^{\check{\nabla}}d^{\check{\nabla}}\check{h}, \check{h} \rangle - \langle \check{\nabla}^{*}\check{\nabla} \check{h}, \check{h} \rangle + \tr_{\check{g}}(\check{h}\circ \check{h}\circ \check{h}) - \frac{\|\omega\|^{2}\|\check{h}\|^{2}}{2} - \frac{2\|\omega\|^{2}\|\check{h}\|^{2}}{n}\\
&      &\ \ \  \ + \frac{\|\omega\|^{6}}{2n} + \frac{\|\omega\|^{6}}{n^{2}}\}\circ\pi.
\end{eqnarray*}
\noindent{Note that we used equation (\ref{baseEinstein}) to get the fourth equality in the above calculation.} \hspace{1.5cm} $\Box$

\subsection{Proof of Corollary \ref{HarmonicCurvatureBaseInstability}}
Now we assume in addition that $\check{g}$ has harmonic curvature but is not Einstein.
Then $d^{\check{\nabla}}\check{h}=2d^{\check{\nabla}}\check{\Ric}=0$ and so by Theorem $\ref{CircleBundleInstability}$, we have
\begin{eqnarray*}
\lefteqn{\int_{M}\langle((\nabla)^{*}\nabla-2\mathring{R}^{g})\pi^{*}\left(\check{h}-\frac{\|\omega\|^{2}}{n}\check{g}\right), \pi^{*}\left(\check{h}-\frac{\|\omega\|^{2}}{n}\check{g}\right)\rangle \,d\vol_{g} }\\
&= & \int_{M} \{- \langle \check{\nabla}^{*}\check{\nabla} \check{h}, \check{h} \rangle + tr_{\check{g}}(\check{h}\circ \check{h}\circ \check{h}) - \frac{\|\omega\|^{2}\|\check{h}\|^{2}}{2} - \frac{2\|\omega\|^{2}\|\check{h}\|^{2}}{n} + \frac{\|\omega\|^{6}}{2n} + \frac{\|\omega\|^{6}}{n^{2}}\}\circ\pi \, d\vol_{g}\\
&= & 2\pi \int_{B}\{ -\langle \nabla\check{h}, \nabla\check{h} \rangle + tr_{\check{g}}(\check{h}\circ \check{h}\circ \check{h}) - \frac{\|\omega\|^{2}\|\check{h}\|^{2}}{2} - \frac{2\|\omega\|^{2}\|\check{h}\|^{2}}{n} + \frac{\|\omega\|^{6}}{2n} + \frac{\|\omega\|^{6}}{n^{2}} \} \,d\vol_{\check{g}}\\
&  \leq & 2\pi \int_{B}\{ tr_{\check{g}}(\check{h}\circ \check{h}\circ \check{h}) - \frac{\|\omega\|^{2}\|\check{h}\|^{2}}{2} - \frac{2\|\omega\|^{2}\|\check{h}\|^{2}}{n} + \frac{\|\omega\|^{6}}{2n} + \frac{\|\omega\|^{6}}{n^{2}}\} \, d\vol_{\check{g}}.\\
\end{eqnarray*}
Thus, in order to prove Corollary \ref{HarmonicCurvatureBaseInstability}, it suffices to show that
\begin{equation}
\tr_{\check{g}}(\check{h}\circ \check{h}\circ \check{h}) - \frac{\|\omega\|^{2}\|\check{h}\|^{2}}{2} - \frac{2\|\omega\|^{2}\|\check{h}\|^{2}}{n} + \frac{\|\omega\|^{6}}{2n} + \frac{\|\omega\|^{6}}{n^{2}}\leq 0,
\end{equation}
on $B$ and the inequality is strict on an open subset of $B$.

Now let us work at a point $b\in B$. We can choose an orthonormal basis of $T_{b}B$ such that at $b$, $\omega$ has its block diagonal standard form as a  skew-symmetric bilinear form on $T_{b}B$. Then this basis diagonalizes the symmetric bilinear form $\check{h}$ on $T_{b}B$, i.e., if $n=2m$, then $\check{h}={\rm diag}\{a^{2}_{1}, a^{2}_{1}, \cdots, a^{2}_{m}, a^{2}_{m}\}$, and if $n=2m+1$, then $\check{h}={\rm diag}\{a^{2}_{1}, a^{2}_{1}, \cdots, a^{2}_{m}, a^{2}_{m}, 0\}$.

Let $b_{i}=a^{2}_{i}$ for $1\leq i\leq m$. Then
$\|\omega\|^{2} =  2\sum\limits^{m}_{i=1}b_{i}=4E,$
and $\|\check{h}\|^{2} = 2\sum\limits^{m}_{i=1}b^{2}_{i}$.
Set $t_{i}=\frac{b_{i}}{2E}$ for $1\leq i\leq m$. Then $\sum\limits^{m}_{i=1}t_{i}=1$, and at $b$
\begin{eqnarray*}
\lefteqn{ \tr_{\check{g}}(\check{h}\circ \check{h}\circ \check{h}) - \frac{\|\omega\|^{2}\|\check{h}\|^{2}}{2} - \frac{2\|\omega\|^{2}\|\check{h}\|^{2}}{n} + \frac{\|\omega\|^{6}}{2n} + \frac{\|\omega\|^{6}}{n^{2}} } \\
&=&  2\left(\sum^{m}_{i=1}b^{3}_{i}-(\sum^{m}_{i=1}b_{i})(\sum^{m}_{j=1}b^{2}_{j})\right)
+\left(\frac{4}{n}+\frac{8}{n^{2}}\right)\left(\sum^{m}_{i=1}b_{i}\right)^{3}-\frac{8}{n}(\sum^{m}_{i=1}b_{i})(\sum^{m}_{j=1}b^{2}_{j})\\
&= &   2(2E)^{3}f(t_{1}, \cdots, t_{m}),
\end{eqnarray*}
where $f(t_{1}, \cdots, t_{m}):=\sum\limits^{m}_{i=1}t^{3}_{i}-\left(1+\frac{4}{n}\right)\sum\limits^{m}_{i=1}t^{2}_{i}+\frac{2}{n}+\frac{4}{n^{2}}$.

{\em Case 1: $n=2m$.}
\begin{align*}
f(t_{1}, \cdots, t_{m})
&=\sum^{m}_{i=1}\left(t_{i}-\frac{1}{m}+\frac{1}{m}\right)^{3}-\left(1+\frac{2}{m}\right)\sum^{m}_{i=1}\left(t_{i}-\frac{1}{m}+\frac{1}{m}\right)^{2}
+\frac{1}{m}+\frac{1}{m^{2}}\\
&=\sum^{m}_{i=1}\left(t_{i}-\frac{1}{m}\right)^{2}(t_{i}-1)\leq 0,
\end{align*}
since $0\leq t_{i}\leq 1$ and $\sum\limits^{m}_{i=1}t_{i}=1$.

Equality holds iff $\left(t_{i}-\frac{1}{m}\right)^{2}(t_{1}-1)=0$ for all $1\leq i\leq m$. If $t_{i}=1$ for some $i$, all other $t_{j}=0$ and we get a contradiction unless $m=1$. So if $m>1$, equality holds iff $t_{i}=\frac{1}{m}$ for all $i$. On the other hand, if $m=1$ then
the base manifold, which has constant scalar curvature, is $2$-dimensional and so must be Einstein. So $m>1$ holds, and
$$\tr_{\check{g}}(\check{h}\circ \check{h}\circ \check{h}) - \frac{\|\omega\|^{2}\|\check{h}\|^{2}}{2} - \frac{2\|\omega\|^{2}\|\check{h}\|^{2}}{n} + \frac{\|\omega\|^{6}}{2n} + \frac{\|\omega\|^{6}}{n^{2}}\leq 0.$$
Furthermore, for at least one point $b\in B$, the inequality must be strict, for otherwise, $(B, \check{g})$ would be Einstein, which contradicts our assumption. But then strict inequality must also hold in a neighborhood of $b$ and we are done.

\medskip

{\em Case 2: n=2m+1.} We expand $f$ about $t=\frac{2}{2m+1}$ instead and obtain
\begin{equation*}
f(t_{1}, \cdots, t_{m})=\sum^{m}_{i=1}\left(t_{i}-\frac{2}{2m+1}\right)^{2}(t_{i}-1)-\frac{2+4m}{(2m+1)^{3}}
<\sum^{m}_{i=1}\left(t_{i}-\frac{2}{2m+1}\right)^{2}(t_{i}-1)\leq 0,
\end{equation*}
since $t_{i}\leq1$. Thus
$$\tr_{\check{g}}(\check{h}\circ \check{h}\circ \check{h}) - \frac{\|\omega\|^{2}\|\check{h}\|^{2}}{2} - \frac{2\|\omega\|^{2}\|\check{h}\|^{2}}{n} + \frac{\|\omega\|^{6}}{2n} + \frac{\|\omega\|^{6}}{n^{2}}< 0,$$
on $B$. This completes the proof of Corollary $\ref{HarmonicCurvatureBaseInstability}$.  \hspace{7.5cm} $\Box$

\subsection{Proof of Corollary \ref{StableCircleBundleRigidity}}

By Corollary $\ref{HarmonicCurvatureBaseInstability}$, $\check{g}$ has to be Einstein. Then Theorem 9.76 in \cite{Bes87} shows that $(B, \check{g})$ has a compatible almost complex structure $J^{\prime} := \frac{\sqrt{n}}{\|\omega\|}J$, where the automorphism $J: TB\rightarrow TB$ is defined by $\omega(\check{X}, \check{Y})=\check{g}(J\check{X}, \check{Y})$ for any pair of vector fields $\check{X}, \check{Y}\in C^{\infty}(B, TB)$.
 Let $\omega^{\prime}$ be the corresponding K\"{a}hler form. Then by definition $\omega=\frac{\|\omega\|}{\sqrt{n}}\omega^{\prime}$. By the resolution of the Goldberg conjecture \cite{Sek87}, the almost complex structure $J^{\prime}$ is actually integrable, i.e., $(B, \check{g}, J^{\prime})$ is K\"{a}hler Einstein.

If the base manifold $B$ has $b_{2}>1$, then there exists a real harmonic  $2$-form $\eta$ of type $(1, 1)$
orthogonal to $\omega^{\prime}$ with respect to $\check{g}$. By composing with the complex structure $J^{\prime}$
we obtain a symmetric $2$-tensor $\check{h}$ as $\check{h}(\check{X}, \check{Y})=\eta(J^{\prime}\check{X}, \check{Y})$.
Because $J^{\prime}$ is parallel, $\check{h}$ is a TT-tensor on $(B, \check{g})$. By further straightforward calculations,
$h=\pi^{*}\check{h}$ is a TT-tensor on $(M, g)$.

Moreover, again because $J^{\prime}$ is parallel, by
\begin{equation*}
0=(\Delta_{d}\,\eta)_{ij}=(\check{\nabla}^{*}\check{\nabla}\eta)_{ij}-2\check{R}_{ikjl}\eta_{kl}+\check{\Ric}_{ik}\eta_{kj}+\check{\Ric}_{jk}\eta_{ki},
\end{equation*}
we have
\begin{equation*}
\check{\nabla}^{*}\check{\nabla}\check{h}-2\mathring{\check{R}}\check{h}+2\check{\Ric}\circ\check{h}=0.
\end{equation*}
On the other hand, by the Einstein conditions, $\check{\Ric}=\frac{n+2}{4n}\,\|\omega\|^{2}\check{g}$. Thus,
\begin{equation}\label{EinsteinBaseInstabilityDirection}
\check{\nabla}^{*}\check{\nabla}\check{h}-2\mathring{\check{R}}\check{h}=-\frac{n+2}{2n}\|\omega\|^{2}\check{h}.
\end{equation}

Finally, by Proposition $\ref{StabilityOperatorRelation}$, the equation $(\ref{EinsteinBaseInstabilityDirection})$, and the fact $\omega=\frac{\|\omega\|}{\sqrt{n}}\omega^{\prime}$, we have
\begin{eqnarray*}
\langle\nabla^{*}\nabla h-2\mathring{R}h, h\rangle
&=& \langle\check{\nabla}^{*}\check{\nabla}\check{h}-2\mathring{\check{R}}\check{h}, \check{h}\rangle\circ\pi+
\frac{\|\omega\|^{2}}{n}(\omega^{\prime}_{ki}\omega^{\prime}_{kl}\check{h}_{lj}\check{h}_{ij}
+\omega^{\prime}_{ik}\omega^{\prime}_{jl}\check{h}_{kl}\check{h}_{ij})\circ\pi\\
&=& \langle -\frac{n+2}{2n}\|\omega\|^{2}\check{h}, \check{h}\rangle\circ\pi+\frac{\|\omega\|^{2}}{n}(\|\check{h}\|^{2}+\langle \check{h}(J^{\prime}\cdot, J^{\prime}\cdot), \check{h}\rangle)\circ\pi\\
&\leq& -\frac{n+2}{2n}\|\omega\|^{2}\|\check{h}\|^{2}\circ\pi+\frac{2}{n}\|\omega\|^{2}\|\check{h}\|^{2}\circ\pi\\
&=&-\left(\frac{1}{2}-\frac{1}{n}\right)\|\omega\|^{2}\|\check{h}\|^{2}\circ\pi,
\end{eqnarray*}
which is negative if $n\geq 4$, and this contradicts the assumption of stability. Note that in the second last step above, we have used $\langle \check{h}(J^{\prime}\cdot, J^{\prime}\cdot), \check{h}\rangle \leq \|\check{h}\|^{2}$.

When $n=2$, the base manifold $B = S^2$ so certainly $b_{2}=1$. (The total space $(M, g)$ is then a $3$-dimensional
Einstein manifold and so has constant sectional curvature. Therefore, $g$ is stable.)
 This completes the proof of Corollary \ref{StableCircleBundleRigidity}. \hspace{9cm} $\Box$


\section{Acknowlegments}

The first author would like to express his gratitude to Professors Xianzhe Dai and Guofang Wei for many inspiring discussions and their constant encouragement and support. He would also like to thank Chenxu He for his interests and useful discussions. During the Fall term of
2017-2018, he has been supported by a Fields Postdoctoral Fellowship. He thanks the Fields Institute for Research in Mathematical Sciences for providing an excellent research environment.

The second author acknowledges partial support by NSERC  Grant No. OPG0009421.

Both authors thank Professor Peng Lu for his comments on an earlier version of the paper.



\end{document}